\theoremstyle{plain}
\newtheorem{Lemma}{Lemma}
\newtheorem{Thm}[Lemma]{Theorem}
\newtheorem{Prop}[Lemma]{Proposition}
\newtheorem{Cor}[Lemma]{Corollary}
\theoremstyle{remark}
\newtheorem{Example}[Lemma]{Example}
\newtheorem*{Rem*}{Remark}
\numberwithin{equation}{section}
\numberwithin{Lemma}{section}
\newcommand{\EEE}{\mathcal{E}}
\newcommand{\GGG}{\mathcal{G}}
\newcommand{\HHH}{\mathcal{H}}
\newcommand{\MMM}{\mathcal{M}}
\newcommand{\NNN}{\mathcal{N}}
\newcommand{\Fm}{\mathfrak{m}}
\newcommand{\Fn}{\mathfrak{n}}
\newcommand{\Fp}{\mathfrak{p}}
\newcommand{\Fq}{\mathfrak{q}}
\newcommand{\DD}{{\mathbb{D}}}
\newcommand{\EE}{{\mathbb{E}}}
\newcommand{\FF}{{\mathbb{F}}}
\newcommand{\GG}{{\mathbb{G}}}
\newcommand{\MM}{{\mathbb{M}}}
\newcommand{\QQ}{{\mathbb{Q}}}
\newcommand{\TT}{{\mathbb{T}}}
\newcommand{\ZZ}{{\mathbb{Z}}}
\DeclareMathOperator{\Def}{Def}
\DeclareMathOperator{\Ext}{Ext}
\DeclareMathOperator{\Gal}{Gal}
\DeclareMathOperator{\GL}{GL}
\DeclareMathOperator{\Hom}{Hom}
\DeclareMathOperator{\Lie}{Lie}
\DeclareMathOperator{\Lift}{Lift}
\DeclareMathOperator{\Quot}{Quot}
\DeclareMathOperator{\SL}{SL}
\DeclareMathOperator{\Spec}{Spec}
\DeclareMathOperator{\codim}{codim}
\DeclareMathOperator{\id}{id}
\DeclareMathOperator{\sep}{sep}
\newcommand{\hatR}{\widehat R}
\newcommand{\hatS}{\widehat S}
\newcommand{\hatOmega}{\widehat\Omega}
\newcommand{\hatGm}{\widehat\GG_m}
\title{Tate modules of universal $p$-divisible groups}
\author{Eike Lau}
\date{\today}
\begin{document}

\begin{abstract}
A $p$-divisible group over a complete local domain determines a
Galois representation on the Tate module of its generic fibre.
We determine the image of this representation
for the universal deformation in mixed characteristic 
of a bi-infinitesimal group 
and for the $p$-rank strata of the universal deformation 
in positive characteristic of an infinitesimal group. 
The method is a reduction 
to the known case of one-dimensional groups by a deformation
argument based on properties of the stratification by
Newton polygons.
\end{abstract}

\maketitle

\section{Introduction}

Let $G$ be a $p$-divisible group of dimension $d$ and height $c+d$
over an algebraically closed field $k$ of characteristic $p$.
Its universal deformation $\GGG$ is defined over a $W(k)$-algebra
$R$ isomorphic to an algebra of power series in $cd$ variables.
For every point $x\in\Spec R$ we have a natural Galois representation,
also referred to as local $p$-adic monodromy,
$$
\rho_x:\Gal(\bar x/x)\to\GL(T_p\GGG(\bar x))\cong\GL_{e(x)}(\ZZ_p)
$$
where $e(x)$ is the etale rank of the fibre $\GGG_x$.
Note that $e(x)=c+d$ if $x$ is a point of characteristic zero
and $e(x)\leq c$ if $x$ is a point of characteristic $p$.
Let $U_e$ be the locally closed subset of $\Spec R$ where $e(x)=e$.

\begin{Thm}
\label{Th1}
 $\:$

i) {}
If $G$ is bi-infinitesimal 
and $x$ is the generic point of\/ $\Spec R$
then the image of $\rho_x$ is the subgroup of all elements 
whose determinant is a $d$-th power.

ii) {}
If $G$ is infinitesimal 
and $x$ is a generic point of\/ $U_e$ for some 
$e\leq c$ then $\rho_x$ is surjective.
\end{Thm}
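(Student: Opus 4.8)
The plan is to reduce both parts to the case $d=1$ --- where $\Spec R$ is a Lubin--Tate deformation space and the conclusions are the classical full-monodromy statements, following from the geometric connectedness of the Lubin--Tate tower with Drinfeld level structure and of its $p$-rank strata --- and to carry out the reduction by induction on $d$, using the Newton stratification of $\Spec(R/p)$. One first disposes of the determinant. In characteristic $0$ the group $\GGG_\eta$ has a $p$-divisible model over the discrete valuation ring $R_{(p)}$, so the restriction of $\det\rho_\eta$ to inertia at $(p)$ is $\chi^{d}$, where $\chi$ is the $p$-adic cyclotomic character; over any height-one prime of $R$ of residue characteristic $0$ the group $\GGG$ is \'etale and hence unramified there; by Zariski--Nagata purity the character $\det\rho_\eta\cdot\chi^{-d}$ therefore extends to $\pi_1(\Spec R)=\pi_1(\Spec k)=1$ and is trivial, so $\det\rho_\eta=\chi^{d}$, which is surjective onto $\ZZ_p^{\times}$ because $k$ is algebraically closed. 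This yields the determinant constraint in i) and reduces i) to the inclusion $\SL_{c+d}(\ZZ_p)\subseteq\operatorname{im}\rho_\eta$; a parallel argument reduces ii) to showing that $\operatorname{im}\rho_x$ contains $\SL_{e}(\ZZ_p)$ and surjects under the determinant.

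Granting ii), part i) would follow from the geometry at $(p)$. The point $\eta$ specialises along $(p)$ to the generic point $\bar\eta$ of $\Spec(R/p)$, which lies in the ordinary locus $U_c$; over $\widehat R_{(p)}$ the universal group has good reduction, and the connected--\'etale sequence of this model produces a filtration $0\to T_p\GGG^{0}\to T_p\GGG_\eta\to T_p\GGG^{\mathrm{et}}\to 0$, stable under a decomposition group at $(p)$ but not under the whole Galois group of $\operatorname{Frac}(R)$: here $T_p\GGG^{\mathrm{et}}$ is unramified at $(p)$ and carries $\rho_{\bar\eta}$, which has image $\GL_c(\ZZ_p)$ by ii) applied to $G$, while $T_p\GGG^{0}$ is the $\chi$-twist of the linear dual of the \'etale $p$-divisible group $(\GGG^{0})^{\vee}$ of height $d$, whose monodromy is $\GL_d(\ZZ_p)$ by ii) applied to $G^{\vee}$ --- which is again infinitesimal, since a bi-infinitesimal $G$ has no multiplicative part. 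These two Levi blocks, the $\ZZ_p$-points of the unipotent radical of the corresponding parabolic $P$ (realised because the universal extension is as ramified as possible along $(p)$), and the fact that $\operatorname{im}\rho_\eta$ is not contained in $P$ (the universal group carries no sub-$p$-divisible group of height $d$), together generate $\SL_{c+d}(\ZZ_p)$ by group theory in $\GL_{c+d}(\ZZ_p)$.

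It remains to prove ii) by induction on $d$, the case $d=1$ being the Lubin--Tate input. For $d\ge2$ and $x$ the generic point of $U_e$, the aim is to realize, inside a space closely tied to $U_e$, a one-dimensional deformation situation whose \'etale part agrees with $\GGG^{\mathrm{et}}$. For instance, when $G=G_0\oplus\hatGm$ one has $\Def(G_0)\subseteq\Def(G)$ with $\dim G_0=d-1$, and the \'etale parts over the corresponding $p$-rank loci coincide, so the induction hypothesis applies at once; to treat an arbitrary infinitesimal $G$ of the given type one deforms within the Newton stratification --- invoking Oort's description of the strata, their closures and generizations, and the almost-product structure of the deformation functor transverse to a stratum --- so as to compare the monodromy over $U_e$ for $G$ with that for such a special $G$, and the same analysis should also supply the surjectivity of $\det\rho_x$.

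The principal obstacle lies in these comparisons of monodromy between different loci (in both of the previous two paragraphs): passing from the monodromy over one locus to that over another is a priori only a subquotient relation --- it involves inertia invariants at the connecting prime --- so one must show that the inertia appearing at each stage is exactly the expected unipotent radical and that the image does not collapse, and one must in particular establish the non-containment in the relevant parabolics, including modulo $p$ (which for part i) amounts to $\GGG_\eta[p]$ carrying no multiplicative-type sub-group-scheme). It is for these points that the fine geometry of the Newton stratification --- purity, the dimensions of the strata, and the transverse structure --- must be brought to bear, and I expect this to be the technical core of the proof.
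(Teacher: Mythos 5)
Your overall strategy --- reduce to the one--dimensional Lubin--Tate case via the Newton stratification, and assemble part i) from the connected--\'etale sequence at $(p)$ --- is the same in spirit as the paper's, and you correctly locate the difficulties; but the two steps you defer are precisely the substance of the proof, and as written they are genuine gaps. The first is your parenthetical claim that the extension over $\hatR_{(p)}$ is ``as ramified as possible,'' i.e.\ that the decomposition group at $(p)$ realises the full unipotent radical $U\cong\ZZ_p^{cd}$. This is the paper's Lemma \ref{Le3} and is the hardest single step: the image of inertia in $U$ is only computable after choosing an embedding $\sigma:\hatR_{(p)}\to W(F')$, it genuinely depends on that choice, and the paper must first prove (Proposition \ref{Pr2}, using that the Kodaira--Spencer map of $\GGG\otimes R/pR$ is an isomorphism) that such embeddings biject with \emph{all} deformations of $\GGG\otimes F'$ over $W(F')$, and then run a Kummer-theoretic argument on Serre--Tate coordinates to find one $\sigma$ whose $cd$ classes in $K_\infty^*/(K_\infty^*)^p$ are independent. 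The paper's explicit Example (a subgroup of $\GL_4(\FF_2)$ of index $8$ surjecting onto both $\GL_3(\FF_2)$ Levi quotients) shows that no refinement of the Levi-block information can substitute for this input. Your group theory additionally requires the unproved assertion $\operatorname{im}\rho_\eta\not\subseteq P$, which needs an extension theorem for sub-$p$-divisible groups of the generic fibre over $R$; the paper avoids this entirely by instead extracting the full unipotent radicals of two \emph{other} parabolics, coming from completions at generic points of the Newton strata with slopes $(\tfrac1{c+1})^{c+1}1^{d-1}$ and $0^{c-1}(\tfrac d{d+1})^{d+1}$, and these two transversal unipotent radicals already generate $\SL(\TT)$.

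The second gap is in part ii), on which part i) rests. After completing at a generic point of the auxiliary Newton stratum and dividing out the multiplicative part, you must know that the resulting one-dimensional group is the \emph{universal} deformation of its special fibre --- otherwise the Lubin--Tate/Drinfeld-level-structure input gives nothing. This is Lemma \ref{Le1}/Proposition \ref{Pr1} of the paper: it requires a dimension count ($\dim S=c=\dim_K\Lambda_H$) together with an analysis of how versality depends on the choice of coefficient field, and your appeal to an ``almost-product structure transverse to a stratum'' does not supply it. Two further points: your induction via $G_0\oplus\hatGm$ compares monodromy in the wrong direction (the image of $\rho$ at a specialisation is not a subgroup of the image at a generisation; the correct mechanism is to complete at a well-chosen special point and use flatness to map a generic point of the completion to $x$), and for $e<c$ one needs the incidence statement (Corollary \ref{CorO}, resting on purity and Oort's dimension formula) that generic points of nested Newton strata can be arranged in a specialisation chain so that the relevant $p$-rank stratum of the completion still dominates $U_e$.
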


This is consistent with the general expectation 
that the monodromy of a universal family 
should be as large as possible, 
where the restriction in i) is caused by a well-known result of 
Raynaud \cite{Raynaud} saying that the determinant of $\rho_x$ 
is the $d$-th power of the cyclotomic character. 
The present article was motivated by recent work of
Tian \cite{Tian07} and Strauch \cite{Strauch07}, see below.
Instead of any attempt for a complete review of the
literature on $p$-adic monodromy of $p$-divisible groups 
and abelian varieties we refer the reader to 
\cite{Chai-Local}, \cite{Achter-Norman}, \cite{Chai-Methods} 
and the references given there.

If $G$ is one-dimensional, Theorem \ref{Th1} is proved in
\cite{Strauch07} using the theory of Drinfeld 
level structures. This result actually applies to one-dimensional 
formal modules over the ring of integers in a local field.
Previously, a number of cases were established by different methods.
Assertion i) for one-dimensional $p$-divisible groups is proved 
by Rosen and Zimmermann \cite{Rosen-Zimmermann,Zimmermann}.
Assertion ii) for the one-dimensional group of slope $1/2$ is a 
classical result of Igusa \cite{Igusa}, see \cite{Katz}, Theorem 4.3, 
while the case of slope $1/3$ is proved in \cite{Tian07} 
if $x$ is the generic point of $\Spec R/pR$.
For elementary $p$-divisible groups of arbitrary dimension 
the surjectivity of $\rho_x$ when $x$ is the generic point
of $\Spec R/pR$ has been conjectured in loc.\ cit. 

The present proof of Theorem \ref{Th1}
is a reduction to the one-dimensional case using 
the results of Oort and de Jong
\cite{Oort-Ann,Oort-PM,deJong-Oort} 
on the Newton stratification on $\Spec R/pR$.
More precisely, to prove ii) we pass to
the complete local ring (denoted $S$) of $R/pR$ 
at a generic point of a suitable Newton stratum
chosen so that $\GGG\otimes S$ 
is the extension of a one-dimensional
infinitesimal $p$-divisible group $\HHH$ and a group of
multiplicative type. 
Since the Tate module of the latter is trivial,
we only have to observe that
$\HHH$ over $S$ is necessarily the 
universal deformation of its special fibre,
see Lemma \ref{Le1} and Proposition \ref{Pr1}.

The proof of i) is more complicated because $p$-divisible 
groups of multiplicative type over a field of characteristic
zero have non-trivial Tate modules.
This leads us to consider different complete local rings 
of $R$ and their contributions to the image of the
Galois representation at the same time. 
Notably we need the following observation,
proved in Proposition \ref{Pr2}:
If $A$ denotes the complete local ring of $R$ at the prime
$pR$ and if $F'$ is an algebraic closure of the residue
field of $A$ then the set of ring homomorphisms $A\to W(F')$ 
lifting the given homomorphism $A\to F'$ is bijective to the 
set of deformations over $W(F')$ of the fibre $\GGG\otimes F'$.
As a consequence the contribution of $A$ to the Galois 
representation is sufficiently large, see Lemma \ref{Le3}.

Below we first explain the proof of Theorem \ref{Th1} 
in sections \ref{SecRed} and \ref{SecNull} and postpone 
the required Lemmas \ref{Le1} and \ref{Le3} until sections 
\ref{SecUni} and \ref{SecGen}.
They are straightforward applications of the
deformation theory of $p$-divisible groups 
developed in \cite{Messing} and \cite{Illusie}.
An alternative proof of Lemma \ref{Le1} in the case
where $G$ has $a$-number one is given in \cite{Tian08}.

It seems plausible that Theorem \ref{Th1} also holds
for special formal modules in the sense of Drinfeld.
This will be taken up in \cite{Lau-Strauch} along with
a study of the Newton stratification in that case.

\subsubsection*{Acknowledgement}
The author is grateful to M.~Strauch,
Y.~Tian, E.~Viehmann, T.~Wedhorn, and Th.~Zink for
interesting discussions on the subject of this article.

\section{Newton strata}

For reference let us recall the results 
on Newton strata we need. 
Let $\bar R=R/pR$.

The Newton polygon of a $p$-divisible group
$H$ is denoted $\NNN(H)$.
Newton polygons are normalised so that slope $0$ 
corresponds to etale groups and slope $1$ to groups
of multiplicative type.
The set of Newton polygons carries a partial order
such that $\beta\preceq\gamma$ if and only if
$\beta$ and $\gamma$ have the same endpoints 
and no point of $\beta$ lies strictly below $\gamma$.
The subset $V_\beta$ of $\Spec\bar R$ where
the Newton polygon of the universal deformation 
is $\preceq\beta$ is closed.
Denote by $V_\beta^\circ$ the open subset of $V_\beta$
where the polygon is equal to $\beta$
and let $\codim(\beta)$ be the number of lattice points 
that lie strictly below $\beta$ and on or above the unique 
ordinary Newton polygon with the same endpoints as $\beta$
(ordinary polygons are those whose slopes are all $0$ or $1$).

\begin{Thm}[\cite{Oort-PM}, Theorem 2.10]
\label{ThO}
The set $V_\beta$ is non-empty if and only if $\NNN(G)\preceq\beta$.
In that case all irreducible components of\/ $V_\beta$ 
have codimension $\codim(\beta)$ in $\Spec\bar R$, 
and consequently $V_\beta$ is the closure of\/ $V_\beta^\circ$.
Generically on $V_\beta$ the $a$-number is at most one.
\end{Thm}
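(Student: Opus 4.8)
This is Oort's theorem \cite{Oort-PM}, so the plan is to reproduce the structure of his proof, which rests on three external inputs: the Grothendieck--Katz specialisation theorem, the purity theorem of de Jong and Oort \cite{deJong-Oort}, and the solution of the Grothendieck conjecture \cite{Oort-Ann}. I would first dispose of the formal parts. Closedness of $V_\beta$ and openness of $V_\beta^\circ$ in $V_\beta$ is the specialisation theorem: the Newton polygon of a specialisation is $\preceq$ that of the original fibre. Since the closed point of $\Spec R$ carries $G$ and is a specialisation of every other point, $\NNN(G)\preceq\NNN(\GGG_x)$ for all $x$; hence $V_\beta\ne\emptyset$ implies $\NNN(G)\preceq\beta$ by transitivity, while conversely $\NNN(G)\preceq\beta$ puts the closed point of $\Spec\bar R$ into $V_\beta$. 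Everything then reduces to showing that every irreducible component of $V_\beta$ has codimension exactly $\codim(\beta)$ in $\Spec\bar R=\Spec k[[x_1,\dots,x_{cd}]]$ and that the $a$-number is generically $\le 1$ on $V_\beta$; granting the codimension statement for all Newton polygons, the identity $V_\beta=\overline{V_\beta^\circ}$ is automatic, since $V_\beta\setminus V_\beta^\circ=\bigcup_{\gamma\prec\beta}V_\gamma^\circ$ lies in $\bigcup_{\gamma\prec\beta}V_\gamma$, and a component of $V_\beta$ contained there would sit in some $V_\gamma$ with $\gamma\prec\beta$ and so have codimension $\ge\codim(\gamma)>\codim(\beta)$ (the function $\codim$ being strictly increasing along covering relations of the poset, each of which adds one enclosed lattice point).

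The core is the case $a(G)\le 1$, which I would handle by explicit displays. When $a(G)\le 1$ the Dieudonn\'e module of $G$ is cyclic over the Dieudonn\'e ring, so one can fix a $W(k)$-basis in which $F$ acts by a single companion-type matrix, and the universal display of $\GGG$ over $\bar R$ is then obtained by inserting the $cd$ deformation coordinates into the entries corresponding to a lift of $\Lie\GGG$. For a geometric point $\bar R\to\Omega$ the Newton polygon of the fibre is read off from this matrix, and a direct computation should identify $V_\beta^\circ$ with a quasi-affine variety whose defining conditions involve exactly the coordinates indexed by the lattice points counted in $\codim(\beta)$. This would yield at once $V_\beta^\circ\ne\emptyset$, the equality $\dim V_\beta^\circ=cd-\codim(\beta)$, and the fact that the generic point of each component so obtained has $a$-number one.

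To reach the general case I would use that $\GGG$ is universal: the complete local ring of $\bar R$ at a closed point pro-represents the deformation functor of the corresponding fibre in $cd$ variables, and at a point generic on a given component of $V_\beta$ --- after a further generisation inside $V_\beta^\circ$, along which one may assume the $a$-number is $\le 1$, the locus $a\ge 2$ being cut out by additional vanishing conditions and hence thinner --- the local picture is governed by the explicit case. To rule out components of $V_\beta$ of dimension smaller than $cd-\codim(\beta)$ I would invoke purity: $V_\gamma\setminus V_\gamma^\circ$ is everywhere of codimension one in $V_\gamma$. A descending induction over the poset of Newton polygons, based at $\beta=\NNN(G)$ (where $V_\beta$ is just the locus carrying the maximal polygon), then forces every component of every $V_\beta$ to have codimension precisely $\codim(\beta)$, and delivers the generic $a$-number bound in passing.

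The main obstacle is exactly the material I would be importing rather than proving: the existence half of the codimension statement --- that each $\beta\preceq\NNN(G)$ really is the generic Newton polygon of some deformation of $G$ carrying the right number of moduli --- is essentially the Grothendieck conjecture \cite{Oort-Ann}, and the exactness of the codimension on every component rests on purity \cite{deJong-Oort}. Among the parts one can do directly, the genuine difficulty is the normal-form computation for $a\le 1$ groups and the combinatorial bookkeeping matching the dimension of the explicit stratum with the lattice-point count defining $\codim(\beta)$.
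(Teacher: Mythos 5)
The paper offers no proof of this statement at all: it is quoted verbatim as \cite{Oort-PM}, Theorem 2.10, and used as a black box (the only related material the paper actually proves is Proposition \ref{PrO}, reduced to \cite{Oort-Ann}, Theorem 3.2 by a transitivity argument). So there is nothing internal to compare against; what you have written is a sketch of Oort's own published argument. As such it is architecturally sound: the formal reductions are correct --- closedness of $V_\beta$ and the non-emptiness criterion from Grothendieck--Katz specialisation, and the deduction of $V_\beta=\overline{V_\beta^\circ}$ from the codimension formula together with strict monotonicity of $\codim$ along $\prec$ (which holds because distinct Newton polygons with the same endpoints have distinct vertex sets, so a strict covering relation adds at least one enclosed lattice point). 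You also attribute the two genuinely hard inputs to the right places: existence of each stratum with the right number of moduli to \cite{Oort-Ann}, and the lower bound on codimension of every component to purity \cite{deJong-Oort}.

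The one step that is too glib is the generic $a$-number bound. You dispose of it with ``the locus $a\ge 2$ being cut out by additional vanishing conditions and hence thinner,'' but that inference requires knowing the cutting equations are nonzero on the given component of $V_\beta$, i.e.\ that the component already contains a point with $a\le 1$ --- which is exactly the assertion to be proved. A component of $V_\beta$ could a priori be entirely contained in the closed locus $\{a\ge 2\}$. In \cite{Oort-PM} this is excluded by a separate dimension estimate: the locus inside $\Spec\bar R$ where the Newton polygon is $\beta$ \emph{and} $a\ge 2$ has dimension strictly less than $cd-\codim(\beta)$ (proved by a deformation/normal-form count of its own), so no component of $V_\beta$ can lie in it once the codimension formula is known. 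If you intend your sketch to stand as a proof outline rather than a citation, that estimate needs to be named as a fourth input alongside specialisation, purity, and the Grothendieck conjecture.
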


At those points where the $a$-number is one
the strata are nested nicely:

\begin{Prop}
\label{PrO}
Let $x\in\Spec\bar R$ be given such that $a(\GGG_x)=1$.
Then every intersection $V_{\beta,x}=V_\beta\cap\Spec\bar R_x$ 
is regular and thus irreducible. 
If $V_{\beta,x}$ and $V_{\gamma,x}$ are non-empty, 
in other words $\NNN(\GGG_x)\preceq\beta$ 
and $\NNN(\GGG_x)\preceq\gamma$, 
then $V_{\beta,x}\subseteq V_{\gamma,x}$
if and only if $\beta\preceq\gamma$.
\end{Prop}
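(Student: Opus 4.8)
The plan is to reduce the whole statement to the explicit description, due to Oort and de Jong, of the universal deformation of a $p$-divisible group at a point of $a$-number one; granting that description, regularity is immediate and the nesting relation follows from a short argument with generic points together with the codimension assertion of Theorem \ref{ThO}.

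\emph{Reduction to the completed local ring.} Both regularity of the $V_{\beta,x}$ and the inclusions among them may be tested after completing at $x$. I would write $\mathcal{O}=\widehat{\mathcal{O}}_{\bar R,x}$, let $I_\beta\subseteq\bar R$ be the ideal defining $V_\beta$, and note that the formation of $V_\beta$ commutes with the base change $\bar R\to\mathcal{O}$, the Newton stratification being defined pointwise. So it is enough to understand $\Spec(\mathcal{O}/I_\beta\mathcal{O})$ inside $\Spec\mathcal{O}$ for all $\beta$ with $\NNN(\GGG_x)\preceq\beta$, and to note that a Noetherian local ring embeds into its completion, so that a domain property passes down from $\mathcal{O}/I_\beta\mathcal{O}$.

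\emph{The normal form at $a$-number one.} Because $a(\GGG_x)=1$, I would invoke the analysis in \cite{Oort-PM}, \cite{Oort-Ann}, \cite{deJong-Oort}: one chooses a basis of the Dieudonn\'e module of $\GGG_x$ in the normal form attached to $\beta_0:=\NNN(\GGG_x)$ and writes the universal deformation over $\mathcal{O}$ as an explicit display, obtaining an identification $\mathcal{O}\cong\kappa(x)[[\,t_e : e\in\Sigma\,]]$ with $\Sigma$ a combinatorial index set of size $cd$ built from $\beta_0$, in which $I_\beta\mathcal{O}=(t_e : e\in\Sigma\setminus\Sigma_\beta)$ for an explicit subset $\Sigma_\beta\subseteq\Sigma$ with $|\Sigma\setminus\Sigma_\beta|=\codim(\beta)$. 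Then $\Spec(\mathcal{O}/I_\beta\mathcal{O})$ is the formal coordinate subspace $\Spec\kappa(x)[[\,t_e : e\in\Sigma_\beta\,]]$, hence regular; since $V_{\beta,x}$ is local, it is connected, and a connected regular (hence normal) scheme is irreducible. This gives the first assertion. I expect this step — carefully extracting from the cited papers that the Newton strata appear, near a point of $a$-number one, as coordinate subspaces of a power series ring of the predicted codimension, and checking that the residue field need not be perfect for this — to be the real content; everything else is formal.

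\emph{The nesting.} If $\beta\preceq\gamma$ then $V_\beta\subseteq V_\gamma$ straight from the definitions (if $\NNN(\GGG_y)\preceq\beta$ then $\NNN(\GGG_y)\preceq\gamma$), so $V_{\beta,x}\subseteq V_{\gamma,x}$; this direction uses nothing about $a$-numbers. For the converse, assume $V_{\beta,x}\subseteq V_{\gamma,x}$ with $V_{\beta,x}\neq\emptyset$, and let $\eta$ be the generic point of $V_{\beta,x}$, which is irreducible by the first part. Then $\eta$ is the generic point of an irreducible component of $V_\beta$ passing through $x$, so by Theorem \ref{ThO} it has codimension $\codim(\beta)$ in $\Spec\bar R$. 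For any $\beta'\prec\beta$ with $V_{\beta'}\neq\emptyset$ one has $\codim(\beta')>\codim(\beta)$, hence $V_{\beta'}$ does not contain this component, so $\eta\notin\bigcup_{\beta'\prec\beta}V_{\beta'}$; thus $\eta\in V_\beta^\circ$ and $\NNN(\GGG_\eta)=\beta$. But $\eta\in V_{\gamma,x}\subseteq V_\gamma$ forces $\NNN(\GGG_\eta)\preceq\gamma$, that is, $\beta\preceq\gamma$, as desired.
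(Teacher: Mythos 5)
Your reduction to the completed local ring and your argument for the ``only if'' direction of the nesting are fine; the latter is in fact a pleasant alternative to the paper's route, since it uses only the codimension statement of Theorem \ref{ThO} together with the observation that a minimal point of $V_{\beta,x}$ is automatically a minimal point of $V_\beta$ (and it could even be run without first knowing irreducibility, by taking an arbitrary generic point of $V_{\beta,x}$). The gap is in the step you yourself flag as the real content. The identification $\mathcal{O}=\widehat{\bar R}_x\cong\kappa(x)[[\,t_e:e\in\Sigma\,]]$ with $|\Sigma|=cd$ cannot hold for a non-closed point $x$: the ring $\widehat{\bar R}_x$ is regular of dimension equal to the height of $x$, which is strictly smaller than $cd=\dim\bar R$. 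The underlying confusion is between $\widehat{\bar R}_x$ and the universal deformation ring of the fibre $\GGG_x$ over its residue field $K=\kappa(x)$. The latter is a power series ring $T=K[[t_1,\ldots,t_{cd}]]$ in $cd$ variables, and it is to $T$ (more precisely, to the universal deformation over an algebraically closed field, \cite{Oort-Ann}, Theorem 3.2) that the coordinate-subspace description of the Newton strata applies --- not to $\widehat{\bar R}_x$; in particular $\GGG\otimes\widehat{\bar R}_x$ is not the universal deformation of $\GGG_x$.

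What is missing is precisely the bridge between these two rings, and that bridge is the paper's proof. One uses universality of $\GGG$ and a transitivity argument to produce a local homomorphism $\psi:\bar R_x\to T$ under which the universal deformation of $\GGG_x$ becomes $(\GGG\otimes\bar R_x)\otimes_\psi T$, so that the Newton strata of $\Spec T$ are exactly the preimages of the $V_{\beta,x}$; one then shows, by comparing Kodaira--Spencer maps, that $\Fm/\Fm^2\to\Fn/\Fn^2$ is injective, i.e.\ that $T$ is a power series ring over $\widehat{\bar R}_x$. Only after this do regularity, irreducibility and the nesting descend along the faithfully flat projection $\Spec T\to\Spec\widehat{\bar R}_x$. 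Without such a step your plan is circular: asking whether the strata are coordinate subspaces of $\widehat{\bar R}_x$ in suitable coordinates is essentially the regularity assertion to be proved, and no reference gives it directly at a non-closed point.
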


If $x$ is the maximal ideal of $\bar R$ 
this is \cite{Oort-Ann}, Theorem 3.2.
The general case can be reduced to this case
as is certainly well-known, but for completeness 
a proof is recalled in section \ref{SecDef} below.
We need the following supplement to Theorem \ref{ThO}.
Let us write $x\leq z$ if $x$ lies in the closure of $z$.

\begin{Cor}
\label{CorO} 
Assume that Newton polygons $\beta\preceq\gamma\preceq\delta$,
a point $x\in V_\beta$, and a generic point $z\in V_{\delta}$
are given such that $x\leq z$.
Then there is a generic point $y\in V_\gamma$ 
such that $x\leq y\leq z$.
\end{Cor}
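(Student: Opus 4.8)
The plan is to settle the case $a(\GGG_x)=1$ directly with Proposition \ref{PrO}, and to reduce the general case to it by descending from $z$ to a point lying over $x$ through repeated use of the purity theorem of de Jong--Oort \cite{deJong-Oort}.

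Suppose first that $a(\GGG_x)=1$. Since $\NNN(\GGG_x)\preceq\beta\preceq\gamma\preceq\delta$, the schemes $V_{\gamma,x}$ and $V_{\delta,x}$ are non-empty, hence regular and irreducible by Proposition \ref{PrO}, and $V_{\gamma,x}\subseteq V_{\delta,x}$. A non-empty irreducible $V_{\eta,x}$ is the localization at $x$ of the unique irreducible component $C_\eta$ of $V_\eta$ through $x$, and its generic point is that of $C_\eta$ (the generic point of $C_\eta$ lies in $\Spec\bar R_x$ because $x\in C_\eta$). As $z$ is a generic point of $V_\delta$ and $x\le z$, we get $C_\delta=\overline{\{z\}}$, so $z$ is the generic point of $V_{\delta,x}$. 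Let $y$ be the generic point of $V_{\gamma,x}$. Then $x\le y$; moreover $y\le z$ since $V_{\gamma,x}\subseteq V_{\delta,x}=\overline{\{z\}}\cap\Spec\bar R_x$; and $y$ is a generic point of $V_\gamma$, namely of $C_\gamma$.

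For arbitrary $x$ it is enough to produce a point $x'$ with $x\le x'\le z$, $\NNN(\GGG_{x'})=\NNN(\GGG_x)$ and $a(\GGG_{x'})=1$: the previous paragraph, applied to $x'$ with $\NNN(\GGG_{x'})$ in the role of $\beta$, then yields $y$ with $x\le x'\le y\le z$. We may assume $\NNN(\GGG_x)$ is not the ordinary polygon (otherwise $\beta=\gamma=\delta$ is ordinary and $y=z$ works). Set $z_0=z$. As long as $\NNN(\GGG_{z_i})\ne\NNN(\GGG_x)$, the closure $\overline{\{z_i\}}$ is an integral scheme with generic Newton polygon $\NNN(\GGG_{z_i})$, and $x\in\overline{\{z_i\}}$ has strictly smaller Newton polygon; by purity the closed locus of $\overline{\{z_i\}}$ where the Newton polygon is not generic is non-empty and pure of codimension one, so we may take for $z_{i+1}$ the generic point of one of its irreducible components through $x$. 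Then $x\le z_{i+1}\le z_i$ and $\dim\overline{\{z_{i+1}\}}=\dim\overline{\{z_i\}}-1$. Inductively $\overline{\{z_i\}}$ is an irreducible component of $V_{\NNN(\GGG_{z_i})}$ (the case $i=0$ being the hypothesis on $z$), so $\dim\overline{\{z_i\}}=\dim\bar R-\codim(\NNN(\GGG_{z_i}))$ by Theorem \ref{ThO}; since $\overline{\{z_{i+1}\}}\subseteq V_{\NNN(\GGG_{z_{i+1}})}$ and all components of the latter have dimension $\dim\bar R-\codim(\NNN(\GGG_{z_{i+1}}))$, the dimension equality forces $\NNN(\GGG_{z_i})$ to cover $\NNN(\GGG_{z_{i+1}})$ and $\overline{\{z_{i+1}\}}$ to be a component of $V_{\NNN(\GGG_{z_{i+1}})}$. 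The polygons $\NNN(\GGG_{z_i})$ strictly decrease while staying $\succeq\NNN(\GGG_x)$, so the descent stops at some $z_N$ with $\NNN(\GGG_{z_N})=\NNN(\GGG_x)$; as $\overline{\{z_N\}}$ is a component of $V_{\NNN(\GGG_x)}$, Theorem \ref{ThO} gives $a(\GGG_{z_N})\le1$, hence $=1$ since $\NNN(\GGG_x)$ is not ordinary, and $x'=z_N$ has the required properties.

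I expect the main obstacle to be the iterated application of purity: at each stage one must know that the non-generic Newton polygon locus of the possibly singular integral scheme $\overline{\{z_i\}}$ is non-empty --- which is guaranteed by $x$ lying in it --- and that one of its components through $x$ has codimension exactly one, so that $\overline{\{z_{i+1}\}}$ is again a full Newton stratum of $\Spec\bar R$ to which the same reasoning applies. This dimension bookkeeping, resting on Theorem \ref{ThO} together with \cite{deJong-Oort}, is where the real work lies.
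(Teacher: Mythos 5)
Your proof is correct and follows essentially the same route as the paper: both arguments use purity together with the codimension count of Theorem \ref{ThO} to produce an intermediate point $x'$ with $x\leq x'\leq z$ that is a generic point of $V_{\NNN(\GGG_x)}$ (hence has $a$-number one), and then conclude by applying Proposition \ref{PrO} at $x'$. The only difference is presentational: you unwind the purity estimate into an explicit chain of codimension-one specializations, where the paper invokes the resulting bound $n\leq\codim(\beta)-\codim(\delta)$ in one step.
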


\begin{proof}
By a change of $\beta$ we may assume that $x$ lies in $V_\beta^\circ$.
Let $Z$ be the irreducible component of $V_\delta$ 
that contains $z$ and let $x'$ be a generic point of
$Z\cap V_\beta$ such that $x\leq x'$. 
Then $x'$ also lies in $V_\beta^\circ$. 
If $n$ denotes the codimension of $x'$ in $Z$, 
the Purity Theorem \cite{deJong-Oort}, Theorem 4.1
implies that $n\leq\codim(\beta)-\codim(\delta)$.
By Theorem \ref{ThO} we must have equality and 
$x'$ is a generic point of $V_\beta$, 
in particular $a(\GGG_{x'})=1$.
By Proposition \ref{PrO} there is a unique generic 
point $y$ of $V_\gamma$ between $x'$ and $z$.
\end{proof}

\section{Reduction to one-dimensional groups}
\label{SecRed}

Let $\bar R=R/pR$ as before.
We begin with the proof of the second part of Theorem \ref{Th1}
in the case where $x$ is the generic point of $\Spec\bar R$,
or equivalently $e=c$.

\begin{proof}[Proof of Theorem \ref{Th1} part ii) if $e=c$]
We may assume that $d\geq 1$.
Let $\beta=\NNN(G)$ and let $\gamma$ be the Newton 
polygon given by the following slope sequence.
$$
\gamma=(\,\underbrace{\textstyle\frac{1}{c+1},\ldots,\frac{1}{c+1}}_{c+1},
\underbrace{1,\ldots,1}_{d-1}\,)
$$
Then $\beta\preceq\gamma$ because $G$ is assumed infinitesimal.
Choose a generic point $\Fp$ of the 
Newton stratum $V_{\gamma}$ of $\Spec\bar R$, 
let $S$ be the completion of the local ring $\bar R_\Fp$,
and let $K$ be its residue field.
Since $\bar R$ is regular and regularity is preserved 
under localisations and completions, 
$S$ is a complete regular local ring.
By Theorem \ref{ThO} the dimension of $S$ is $c$
and the Newton polygon of $\GGG\otimes K$ is $\gamma$.

Let $K'$ be an algebraic closure of $K$ 
and let $S'=K'[[t_1,\ldots,t_c]]$.
The projection $S\to K$ admits a section $K\to S$
in the category of $k$-algebras as $k$ is perfect,
so there is an isomorphism of $k$-algebras
$S\cong K[[t_1,\ldots,t_c]]$
compatible with the projections to $K$. 
We choose one such isomorphism.
Then $S$ becomes a subring of $S'$ by $t_i\mapsto t_i$
and $S'$ becomes an $R$-algebra by the composition
$R\to S\to S'$.

By the choice of $\gamma$, the fibre $\GGG\otimes K'$ has 
a unique $p$-divisible subgroup $M'$ of multiplicative type 
and dimension $d-1$. The quotient $H'=(\GGG\otimes K')/M'$ 
is a one-dimensional infinitesimal $p$-divisible group over $K'$
of height $c+1$.
There is a unique lift of $M'$ to a $p$-divisible subgroup 
of multiplicative type $\MMM'\subset\GGG\otimes S'$, 
see \cite{Groth}, chapitre 3.1 and \cite{deJong}, Lemma 2.4.4,
and the quotient $\HHH'=(\GGG\otimes S')/\MMM'$ 
is a deformation of $H'$ over $S'$.

\begin{Lemma}
\label{Le1}
In this situation $\HHH'$ is a universal deformation of $H'$.
\end{Lemma}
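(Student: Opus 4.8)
The plan is to compare deformation functors dimension by dimension. The universal deformation space of the one-dimensional infinitesimal $p$-divisible group $H'$ of height $c+1$ over $K'$ has dimension $c$ (it is $1\cdot((c+1)-1)=c$), and $S'=K'[[t_1,\ldots,t_c]]$ also has Krull dimension $c$; so it suffices to show that the classifying map $\Spec S'\to\Spec R_{H'}$ associated to $\HHH'$ is formally smooth, or equivalently that the induced map on tangent spaces (Kodaira--Spencer map) is surjective, hence an isomorphism for dimension reasons. First I would recall that the tangent space to the deformation functor of a $p$-divisible group $X$ over a field is canonically $\Hom(\Lie X^\vee,\Lie X)$, of dimension $\dim X\cdot\dim X^\vee$, where $X^\vee$ is the Serre dual; for $H'$ this is $\Lie(H')\otimes(\Lie H'^\vee)^\vee$ with $\dim=c$, and for $\GGG\otimes K'$ it is $\Hom(\Lie(\GGG\otimes K')^\vee,\Lie(\GGG\otimes K'))$ with $\dim=cd$.

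Next I would exploit the exact sequence $0\to\MMM'\to\GGG\otimes S'\to\HHH'\to 0$ over $S'$ and its special fibre $0\to M'\to\GGG\otimes K'\to H'\to 0$. Since $M'$ is of multiplicative type and $H'$ is infinitesimal, we have $\Hom(M',H')=0=\Hom(H',M')$ on Lie algebras in the relevant direction, so the deformation theory decomposes: deforming $\GGG\otimes K'$ together with the multiplicative subgroup $\MMM'$ is the same as deforming $M'$, deforming $H'$, and choosing the extension class. But $M'$ is of multiplicative type, hence rigid with no nontrivial deformations, and $\Ext$ groups between a multiplicative-type group and an infinitesimal one also vanish in the relevant sense (by Messing/Illusie, using that the relevant cotangent/Dieudonné contributions are concentrated in complementary slopes). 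The upshot is that the forgetful map from deformations of the pair $(\GGG\otimes K',\MMM')$ to deformations of $H'$ is an isomorphism of functors.

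Then I would invoke the universality of $R$ itself: $\GGG\otimes S'$ is pulled back from the universal deformation $\GGG/R$ via $R\to S'$, and by construction $S$ is the completed local ring of $\bar R=R/pR$ at a generic point $\Fp$ of the Newton stratum $V_\gamma$, which by Theorem~\ref{ThO} has codimension $\codim(\gamma)=cd-c$ in $\Spec\bar R$; hence $\Spec S\hookrightarrow\Spec\bar R$ is a regular immersion whose conormal space has dimension $cd-c$. Concretely this means the composite $\Spec S'\to\Spec\bar R_{\hat\Fp}\to\Spf W(K')[[\text{$cd$ vars}]]$ (the universal equicharacteristic-$p$ deformation after base change to $K'$) is a closed immersion cutting out exactly the locus where the Newton polygon is $\preceq\gamma$, i.e.\ where the multiplicative-type subgroup of dimension $d-1$ survives. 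Combining with the previous paragraph, the map $\Spec S'\to\Spec R_{H'}$ factors the full $cd$-dimensional Kodaira--Spencer map through the $c$-dimensional quotient corresponding to deformations of $H'$, and it identifies the tangent space of $S'$ with that quotient; since both sides have dimension $c$ the map is an isomorphism, proving $\HHH'$ is universal.

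The main obstacle I anticipate is making the decomposition of deformation functors precise: one must check that deforming $\GGG\otimes K'$ automatically deforms the multiplicative subgroup $M'$ (uniqueness of the lift $\MMM'$, already cited from \cite{Groth} and \cite{deJong}) \emph{and} that the resulting sequence of deformation problems genuinely splits, i.e.\ that there is no obstruction coming from $\Ext^1$ between the multiplicative and infinitesimal parts. This requires a careful bookkeeping of the crystalline Dieudorn\'e module / Grothendieck--Messing obstruction theory, using that $\Lie$ of a multiplicative-type group over a $\ZZ_p$-algebra and $\Lie$ of its dual behave oppositely under the Frobenius slope filtration, so the relevant $\Hom$ and $\Ext$ groups vanish. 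This is exactly the kind of computation the introduction promises to carry out in Section~\ref{SecUni} via \cite{Messing} and \cite{Illusie}, so I would isolate it as the technical heart of the lemma and reduce everything else to the dimension count above.
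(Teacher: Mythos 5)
Your top-level strategy (compare tangent spaces and conclude by a dimension count) is the same as the paper's, but the step you yourself identify as ``the technical heart'' is false, and it is fatal as written. You claim that the relevant $\Ext$ groups between multiplicative-type and infinitesimal groups vanish, so that the forgetful map from deformations of the pair $(\GGG\otimes K',\MMM')$ to deformations of $H'$ is an isomorphism of functors. It is not: by the very unique-lifting result you cite, \emph{every} deformation of $\GGG\otimes K'$ over an Artinian local $K'$-algebra $B$ carries the (unique) multiplicative lift of $M'$, so the ``pair'' functor is all of $\Def(\GGG\otimes K')$, which is $cd$-dimensional, whereas $\Def(H')$ is only $c$-dimensional. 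The fibres of the forgetful map are torsors under the group of extensions of $\HHH_B$ by $\mu_{p^\infty}^{d-1}$ reducing to the given extension over $K'$; its tangent space is $\Hom(\omega_{H'^\vee},\Lie M')$, of dimension $c(d-1)$, which is exactly the missing $cd-c$ dimensions (and equals $\dim V_\gamma$: the fibre over the trivial deformation of $H'$ is the formal Newton stratum). So nothing decomposes, and the actual content of the lemma is transversality: one must show that the $c$-dimensional subscheme $\Spec S\to\Def(\GGG\otimes K)$ meets these $c(d-1)$-dimensional fibres only in the closed point at first order, i.e.\ that $\Lambda_H\hookrightarrow\Lambda_{\GGG\otimes K}\to\Fm_S/\Fm_S^2$ is injective. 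Your last paragraph simply asserts this (``it identifies the tangent space of $S'$ with that quotient''), which is the statement to be proved, not a consequence of the dimension count.

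There is a second point your proposal does not see at all: the residue field $K$ of $S$ is imperfect (it is the function field of the $c(d-1)$-dimensional stratum $V_\gamma$), so $\Omega_K\neq 0$ and the isomorphism $S\cong K[[t_1,\ldots,t_c]]$ is a genuine choice of section $K\to S$. The map $\Lambda_H\to\Fm_S/\Fm_S^2$ factors through the projection $\Omega_S\otimes K\to\Omega_{S/K}\otimes K$, whose kernel is the image of the splitting $\Omega_K\to\Omega_S\otimes K$ determined by that section; whether the composite is injective therefore depends a priori on the section. This is what the paper's Proposition \ref{Pr1} isolates: versality holds for a dense open set of sections (already enough for the applications), and for \emph{all} sections precisely when $\kappa_H:\Lambda_H\to\Omega_K$ vanishes, which is then verified using Zink's theorem that a one-dimensional formal group over a separably closed field descends to $\FF_p$. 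Since your argument neither establishes the transversality nor accounts for the dependence on the section, it does not prove the lemma.
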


We postpone the proof until section \ref{SecUni}.
If $f:\Spec S'\to\Spec\bar R$ denotes the chosen morphism
and $y\in\Spec S'$ is the generic point 
then $f(y)=x$ because $f$ is flat.
The natural homomorphism 
$\GGG(\bar x)\cong\GGG(\bar y)\to\GGG'(\bar y)$
is bijective since $\MMM(\bar y)$ is the zero group, 
whence the following commutative diagram.
$$
\xymatrix@M+0.2em@R-1em{
{\Gal(\bar y/y)} \ar[r]^-{\rho_y} \ar[d] &
{\GL(T_p\HHH'(\bar y))} \ar@{-}[d]^{\cong} \\
{\Gal(\bar x/x)} \ar[r]^-{\rho_x} &
{\GL(T_p\GGG(\bar x))} }
$$
Here $\rho_y$ is surjective by \cite{Strauch07}, Theorem 2.1,
so $\rho_x$ is surjective as well.
\end{proof}

A modification of the argument gives the 
second part of Theorem \ref{Th1} in general.

\begin{proof}[Proof of Theorem \ref{Th1} part ii)]
By Theorem \ref{ThO} combined with Proposition \ref{PrO}
the generic points of $U_e$ are precisely the generic points
of the Newton stratum $V_\varepsilon$ where $\varepsilon$
is the lowest Newton polygon with exactly $e$ zeros,
$$
\varepsilon=(\,\underbrace{0,\ldots,0}_e,
\underbrace{\textstyle\frac{1}{c-e+1},\ldots,\frac{1}{c-e+1}}_{c-e+1},
\underbrace{1,\ldots,1}_{d-1}\,).
$$
Let $\gamma$, $\Fp$, $S'$, $\HHH'$, 
and $f:\Spec S'\to\Spec\bar R$ be chosen 
exactly as before with the additional requirement
that $\Fp\leq x$. This is possible by Corollary \ref{CorO} 
applied to the points $\Fm_{\bar R}\leq x$.
The inverse image $f^{-1}(U_e)$ is equal 
to the locus $U_e'$ in $\Spec S'$ where 
the etale rank of $\GGG'$ is equal to $e$.
Let $y\in U_e'$ be the unique generic point.
Since $f$ is flat and since $\Fp\leq x$ we have $f(y)=x$.
Now the proof continues as before.
\end{proof}

\section{Galois action in characteristic zero}
\label{SecNull}

Assume now that $x$ is the generic point of $\Spec R$.
Let $\TT=T_p\GGG(\bar x)$ and let 
$$
\chi:\Gal(\bar x/x)\to\ZZ_p^*
$$ 
be the cyclotomic character.
By \cite{Raynaud}, Theorem 4.2.1 and its proof,
$\Gal(\bar x/x)$ acts on $\Lambda^{c+d}(\TT)$ by $\chi^d$.
Let $\GL'(\TT)$ denote the subgroup of all elements of 
$\GL(\TT)$ whose determinant is a $d$-th power and let  
$\Gal^\circ(\bar x/x)$ be the kernel of $\chi$.
The homomorphism $\rho_x$ induces the following  
homomorphisms $\rho'$ and $\rho^\circ$.
$$
\xymatrix@M+0.2em@R-2.8em{
{\rho':\Gal(\bar x/x)} \ar[r] & \GL'(\TT) \\
{\;\;\;\;\;\cup} & \cup \\
{\rho^\circ:\Gal^\circ(\bar x/x)} \ar[r] & \SL(\TT) \\
}
$$

\begin{Lemma}
\label{Le2}
If $\rho^\circ$ is surjective then so is $\rho'$. 
For $d=1$ the converse also holds.
\end{Lemma}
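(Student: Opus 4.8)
The proof I have in mind rests on two inputs. The first is the identity $\det\circ\,\rho_x=\chi^d$ recalled above: since $\TT$ has rank $c+d$, the group $\Gal(\bar x/x)$ acts on the line $\Lambda^{c+d}(\TT)$ through $\det\circ\,\rho_x$, and by \cite{Raynaud} this character equals $\chi^d$. The second is the surjectivity of the cyclotomic character $\chi\colon\Gal(\bar x/x)\to\ZZ_p^*$, which I would record first. Writing $R\cong W(k)[[t_1,\dots,t_{cd}]]$ and adjoining a primitive $p^m$-th root of unity $\zeta$, one checks that $W(k)[\zeta][[t_1,\dots,t_{cd}]]$ is a domain, free of rank $\varphi(p^m)$ over $R$: indeed $W(k)$ is absolutely unramified, so $W(k)[\zeta]$ is the ring of integers of a totally ramified extension of $W(k)[1/p]$ of degree $\varphi(p^m)$ and hence free of that rank over $W(k)$. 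Consequently the residue field at the generic point of $\Spec R$ acquires an extension of degree exactly $\varphi(p^m)$ upon adjoining $\zeta$, so the image of $\chi$ modulo $p^m$ is all of $(\ZZ/p^m)^*$; passing to the limit, $\chi$ is surjective.

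For the first assertion I would assume $\rho^\circ$ is surjective, that is $\rho_x(\Gal^\circ(\bar x/x))=\SL(\TT)$; this statement is meaningful because $\det\circ\,\rho_x=\chi^d$ is trivial on $\Gal^\circ(\bar x/x)=\ker\chi$. Given $g\in\GL'(\TT)$, write $\det g=u^d$ with $u\in\ZZ_p^*$, pick $h_0\in\Gal(\bar x/x)$ with $\chi(h_0)=u$ (surjectivity of $\chi$), and observe $\det\rho_x(h_0)=u^d=\det g$. Then $g\,\rho_x(h_0)^{-1}\in\SL(\TT)=\rho_x(\Gal^\circ(\bar x/x))$, say $g\,\rho_x(h_0)^{-1}=\rho_x(h_1)$ with $h_1\in\Gal^\circ(\bar x/x)$, whence $\rho_x(h_1h_0)=g$. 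Thus $\rho'$ is surjective. Equivalently, the subgroup $\rho'(\Gal(\bar x/x))$ of $\GL'(\TT)$ contains $\SL(\TT)$, the kernel of $\det\colon\GL'(\TT)\to(\ZZ_p^*)^d$, and it surjects onto $(\ZZ_p^*)^d$ under $\det$ because $\det\circ\,\rho_x=\chi^d$ with $\chi$ onto, so it is all of $\GL'(\TT)$.

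For the converse when $d=1$ one has $\GL'(\TT)=\GL(\TT)$, $\rho'=\rho_x$ and $\chi^d=\chi$, so $\Gal^\circ(\bar x/x)=\ker\chi=\ker(\det\circ\,\rho_x)=\rho_x^{-1}(\SL(\TT))$. Applying $\rho_x$ and using that $\rho_x$ is surjective by hypothesis yields $\rho_x(\Gal^\circ(\bar x/x))=\SL(\TT)\cap\GL(\TT)=\SL(\TT)$, i.e.\ $\rho^\circ$ is surjective. I do not anticipate any serious obstacle here: once the surjectivity of $\chi$ is available the argument is elementary group theory, and that mild computation is the only non-formal ingredient. It is worth flagging why $d=1$ is needed in the converse: when $d>1$ the group $\Gal^\circ(\bar x/x)=\ker\chi$ may be strictly smaller than $\ker\chi^d=\rho_x^{-1}(\SL(\TT))$, so surjectivity of $\rho'$ only forces $\rho_x(\ker\chi^d)=\SL(\TT)$ and says nothing about $\rho_x(\ker\chi)$.
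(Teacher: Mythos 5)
Your proof is correct and is essentially the paper's argument: the paper writes down the morphism of short exact sequences from $1\to\Gal^\circ(\bar x/x)\to\Gal(\bar x/x)\xrightarrow{\chi}\ZZ_p^*\to 1$ to $1\to\SL(\TT)\to\GL'(\TT)\xrightarrow{\det}\ZZ_p^{*,d}\to 1$ and leaves the diagram chase (and the surjectivity of $\chi$, which you verify explicitly via the totally ramified extensions $W(k)[\zeta][[t_1,\dots,t_{cd}]]$) to the reader. Your closing remark correctly identifies why $d=1$ is needed for the converse, namely that $\ker\chi$ may be a proper subgroup of $\ker\chi^d=\rho_x^{-1}(\SL(\TT))$ when $d>1$.
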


\begin{proof}
We have a homomorphism of exact sequences:
$$
\xymatrix@M+0.2em@R-1em{
1 \ar[r] & 
{\Gal^\circ(\bar x/x)} \ar[r] \ar[d]_{\rho^\circ} &
{\Gal(\bar x/x)} \ar[r]^-{\chi} \ar[d]_{\rho'} &
{\ZZ_p^*} \ar[r] \ar@{->>}[d]^{(\:\:)^d} & 1 \\
1 \ar[r] & 
{\SL(\TT)} \ar[r] &
{\GL'(\TT)} \ar[r]^-{\det} &
{\ZZ_p^{*,d}} \ar[r] & 1
}
$$
Both assertions follow easily.
\end{proof}

\begin{Thm}
\label{Th2}
If $G$ is bi-infinitesimal then $\rho^\circ$ is surjective.
\end{Thm}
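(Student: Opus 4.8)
The plan is to prove directly that $\mathrm{im}(\rho^\circ)=\SL(\TT)$. I would exhibit two subgroups of $\mathrm{im}(\rho^\circ)$ — a ``parabolic'' piece coming from the complete local ring $A$ of $R$ at the prime $pR$, and a ``transverse'' piece coming from the one-dimensional reduction of Section~\ref{SecRed} carried out in mixed characteristic — and then check that together they generate $\SL(\TT)$; here $n=c+d$ denotes the rank of $\TT$.

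For the first piece: the closed fibre of $\GGG$ over $A$ is $\bar\GGG$, the generic fibre of the universal deformation over $\Spec\bar R$, which is ordinary since the ordinary locus has codimension $0$ by Theorem~\ref{ThO}. Lifting its multiplicative part over $A$ yields $0\to\MMM\to\GGG\otimes A\to E\to 0$ with $\MMM$ of multiplicative type of height $d$ and $E$ \'etale of height $c$, hence a $\Gal(\operatorname{Frac}A)$-stable filtration $0\to T_p\MMM\to\TT\to T_pE\to 0$ at the generic point of $\Spec A$, which maps to $x$. On $T_pE$ the action is unramified and equals the characteristic-$p$ monodromy of $\GGG$ at the generic point of $\Spec\bar R$, hence all of $\GL_c(\ZZ_p)$ by part~ii) of Theorem~\ref{Th1} applied to $G$; dually, $T_p\MMM$ is up to a Tate twist the characteristic-$p$ monodromy of $G^\vee$, hence all of $\GL_d(\ZZ_p)$ by part~ii) applied to $G^\vee$. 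Since $A$ is absolutely unramified over $\ZZ_p$, the cyclotomic character is surjective already on the inertia subgroup of $\Gal(\operatorname{Frac}A)$, so restricting to $\Gal^\circ$ does not shrink these images below $\SL$. Letting the lift $A\to W(F')$ range over all deformations of $\bar\GGG\otimes F'$, as Proposition~\ref{Pr2} permits, should moreover fill in the unipotent radical $\Hom(T_pE,T_p\MMM)$; the precise statement is Lemma~\ref{Le3}. The conclusion I want is that $\mathrm{im}(\rho^\circ)$ contains $P\cap\SL(\TT)$, where $P\subset\GL(\TT)$ is the maximal parabolic stabilising $L:=T_p\MMM$.

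For the second piece: choose $\gamma$, its generic point $\Fp$ of $V_\gamma$, and the multiplicative subgroup exactly as in the proof of part~ii), but now over the complete local ring $T$ of $R$ at the prime of $R$ lying above $\Fp$, so that $T/pT=S$, $T$ has mixed characteristic, and $\dim T=c+1$. This gives $0\to\MMM_T\to\GGG\otimes T\to\HHH_T\to 0$ with $\MMM_T$ of multiplicative type of height $d-1$ and $\HHH_T$ a deformation over $T$ of the one-dimensional bi-infinitesimal group $H$ of height $c+1$; the mixed-characteristic analogue of Lemma~\ref{Le1} should give that $\HHH_T$ is the universal deformation of $H$. Passing to an algebraically closed residue field and to the generic point $y$, which still maps to $x$, one finds that $\mathrm{im}(\rho_x)$ contains a subgroup stabilising $L':=T_p\MMM_T$, acting trivially on $L'$ after restriction to $\Gal^\circ$, and surjecting onto $\GL(\TT/L')\cong\GL_{c+1}(\ZZ_p)$ by Strauch's theorem \cite{Strauch07} (the case $\dim H=1$ of Theorem~\ref{Th1}). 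Comparing $\MMM_T$ and $\MMM$ after the further localisation of $T$ at $pT$, where both become multiplicative subgroups of the same ordinary reduction, shows $L'\subseteq L$ with $L'$ of corank one in $L$; hence any element of this subgroup inducing on $\TT/L'$ an automorphism that moves the line $L/L'$ fails to preserve $L$, so it lies in $\mathrm{im}(\rho^\circ)\setminus P$.

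It then remains to invoke the standard fact that a closed subgroup of $\SL_n(\ZZ_p)$ containing the $\ZZ_p$-points of a maximal parabolic is either that parabolic or all of $\SL_n(\ZZ_p)$: having produced $P\cap\SL(\TT)$ and an element outside $P$, we conclude $\mathrm{im}(\rho^\circ)=\SL(\TT)$. The steps I expect to be the main obstacles are the two deformation-theoretic inputs — that $\HHH_T$ is the universal deformation of $H$ in mixed characteristic (the analogue of Lemma~\ref{Le1}, which is the real heart of the reduction) and that the family of lifts $A\to W(F')$ supplied by Proposition~\ref{Pr2} genuinely fills in the full unipotent radical (Lemma~\ref{Le3}) — together with the bookkeeping that realises $L'$ and $L$ as nested sublattices of the single module $\TT$; the concluding group theory is routine, modulo the customary care at small primes.
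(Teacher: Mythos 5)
Your architecture is recognisably close to the paper's --- the ordinary filtration over $A=\hatR_{(p)}$, Proposition \ref{Pr2} and Lemma \ref{Le3} for the unipotent direction, and a one-dimensional reduction at a generic point of $V_{\gamma}$ whose multiplicative kernel $L'$ sits inside $L$ with corank one --- but the two group-theoretic steps you declare routine are exactly where the argument breaks. First, the parabolic piece: knowing that $\Gal^\circ_A$ surjects onto $\GL(L)$ and onto $\GL(\TT/L)$ \emph{separately} (via Theorem \ref{Th1} ii) and its dual) does not determine the image in the Levi quotient $\GL(L)\times\GL(\TT/L)$. By Goursat's lemma that joint image could be the graph of a correlation between quotients of the two factors --- both have elementary abelian composition factors $\ZZ/p$ in common, and for $c=d$ even isomorphic simple ones --- and containing the unipotent radical $U$ does not break such a correlation, since commutators of Levi elements with $U$ stay in $U$. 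What you would need is \emph{joint} surjectivity of the characteristic-$p$ monodromy onto the \'etale and multiplicative parts of the ordinary generic fibre simultaneously, which is strictly stronger than Theorem \ref{Th1} ii) and is proved nowhere. So the claim that the image contains all of $P\cap\SL(\TT)$ is not established. Second, the ``standard fact'' you invoke at the end is false over $\ZZ_p$: the Iwahori subgroup of $\SL_2(\ZZ_p)$, i.e.\ the preimage of the standard Borel of $\SL_2(\FF_p)$, properly contains the Borel of $\SL_2(\ZZ_p)$ (which is a maximal parabolic here) without being everything, and analogous parahoric-type subgroups sit strictly between $P(\ZZ_p)$ and $\SL_n(\ZZ_p)$ for every maximal parabolic; a maximal parabolic plus one transverse element need not generate.

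The paper sidesteps both problems by never attempting to build the full parabolic from $A$: it extracts from $A$ only the unipotent group $U\cong\ZZ_p^{cd}$ (Lemma \ref{Le3}), and obtains the semisimple contributions from \emph{two} Newton reductions --- your $\gamma$, giving a one-dimensional quotient $\HHH_1$, and a dual polygon $\gamma_2$ giving a subgroup $\HHH_2$ of dimension $d$ and height $d+1$. In each case the complementary piece of $\TT$ is purely cyclotomic or trivial, so $\Gal^\circ$ honestly lands in $\SL(\TT_i)$ and there is no second Levi factor to correlate with. The conclusion is then a conjugation argument ($H\cap U_i$ is a nonzero $\SL(\TT_i)$-submodule of $U_i$, hence everything) followed by the elementary fact that the pointwise stabilisers of $\MM'\subset\TT$ and of $\EE^{\prime\vee}\subset\TT^\vee$ generate $\SL(\TT)$; the Example following the proof shows the input from $A$ cannot be dropped, so you were right to keep Lemma \ref{Le3} central. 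Since your second piece in fact delivers a whole copy of $\SL(\TT/L')$ fixing $L'$ rather than a single transverse element, your strategy is likely repairable along these lines, but as written both the Levi step and the final generation step have genuine holes.
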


In view of Lemma \ref{Le2} 
this is a refinement of Theorem \ref{Th1} i), 
moreover the case $d=1$ follows from
\cite{Rosen-Zimmermann} and \cite{Zimmermann} 
or from \cite{Strauch06}, Theorem 2.1.2. 
By duality this also gives the case $c=1$ because the
Tate module of $\GGG^\vee$ is $\Hom(\TT,\ZZ_p(1))$.

\begin{proof}[Proof of Theorem \ref{Th2}]
We may assume that $c,d\geq 2$.
Let $\beta=\NNN(G)$ and consider the following
Newton polygons $\gamma_i$
with the same endpoints as $\beta$.
\begin{align*}
\gamma_1&{}=(\,\underbrace{\textstyle\frac{1}{c+1},\ldots,\frac{1}{c+1}}_{c+1},
\underbrace{1,\ldots,1}_{d-1}\,) \\
\gamma_2&{}=(\,\underbrace{0,\ldots,0}_{c-1},
\underbrace{\textstyle\frac{d}{d+1},\ldots,\frac{d}{d+1}}_{d+1}\,)
\end{align*}
Since $G$ is bi-infinitesimal we have $\beta\preceq\gamma_i$.
Let $\Fp_i\in\Spec R$ be a generic point of
the Newton stratum $V_{\gamma_i}\subseteq\Spec R/pR$.
By Theorem \ref{ThO} the Newton polygon of the fibre
$\GGG_{\Fp_i}$ is $\gamma_i$ and the codimension of 
$\Fp_i$ in $\Spec R$ is $c_i+1$ where $c_1=c$ and $c_2=d$.
The complete local ring $S_i=\hatR_{\Fp_i}$ 
is regular and unramified in the sense that $p$ 
is part of a minimal set of generators of the
maximal ideal.
Let $S_i'$ be an unramified regular complete local ring 
of dimension $c_i+1$ whose residue field $K_i'$ is an 
algebraic closure of the residue field $K_i$ of $S_i$
and choose an embedding $S_i\to S_i'$
such that $S_i'\otimes_{S_i}K_i=K_i'$.

(More explicitly, put $S_i'=W(K_i')[[t_1,\ldots,t_{c_i}]]$;
then choose a Cohen ring $C_i$ in $S_i$, 
an isomorphism of $C_i$-algebras 
$S_i\cong C_i[[t_1,\ldots,t_{c_i}]]$,
and an embedding of $C_i$ into $W(K_i')$; 
extend this to an embedding
$S_i\to S_i'$ by $t_i\mapsto t_i$.)

Let $\Fq\subset R$ and $\Fq_i\subset S_i'$ 
be the prime ideals generated by $p$. 
The complete local rings $A=\hatR_{\Fq}$ 
and $B_i=(\hatS_i')_{\Fq_i}$ are unramified
discrete valuation rings that fit into
a commutative diagram of rings:
$$
\xymatrix@M+0.2em@R-1em{
S_1' \ar[d] & R \ar[r] \ar[d] \ar[l] & S_2' \ar[d] \\
B_1 & A \ar[r] \ar[l] & B_2
}
$$

The scalar extensions of $\GGG$ to these rings 
admit natural filtrations of different types:
since the fibre $\GGG_\Fq$ is ordinary, over $A$ 
there is an exact sequence of $p$-divisible groups
\begin{equation}
\label{EqS1}
0\longrightarrow\MMM\longrightarrow\GGG\otimes A
\longrightarrow\EEE\longrightarrow 0
\end{equation} 
where $\MMM$ is of multiplicative type of height $d$
and $\EEE$ is etale of height $c$. 
By the choice of the polygons $\gamma_i$, 
over $S_1'$ there is an exact sequence
\begin{equation}
\label{EqS2}
0\longrightarrow\MMM'\longrightarrow\GGG\otimes S_1'
\longrightarrow\HHH_1\longrightarrow 0
\end{equation} 
where $\MMM'$ is isomorphic to $\mu_{p^\infty}^{d-1}$ 
and $\HHH_1$ is bi-infinitesimal of dimension $1$ and
height $c+1$, while over $S_2'$ there is an exact sequence 
\begin{equation}
\label{EqS3}
0\longrightarrow\HHH_2\longrightarrow\GGG\otimes S_2'
\longrightarrow\EEE'\longrightarrow 0
\end{equation} 
where $\EEE'$ is isomorphic to $(\QQ_p/\ZZ_p)^{c-1}$ 
and $\HHH_2$ is bi-infinitesimal of dimension $d$ 
and height $d+1$.
In both cases $\HHH_i$ is the universal deformation
of its special fibre over $W(K_i)$-algebras because
$\HHH_i\otimes S_i/pS_i$ is the universal deformation 
over $K_i$-algebras according to Lemma \ref{Le1} 
(applied to the dual if $i=2$). 
Since over $B_i$ all homomorphisms from groups of 
multiplicative type to etale groups are trivial, 
as subgroups of $\GGG\otimes B_1$ and $\GGG\otimes B_2$ 
we have 
\begin{equation}
\label{EqS4}
\MMM'\otimes_{S_1'}B_1\subseteq\MMM\otimes_AB_1,\qquad
\MMM\otimes_AB_2\subseteq\HHH_2\otimes_{S_2'}B_2.
\end{equation}

Let $F'$ be an algebraic closure of the residue field of $A$, 
let $A'=W(F')$, and choose an embedding $\sigma:A\to A'$ 
extending the given homomorphism $A\to F'$. 
This time the choice makes a difference 
and will be fixed later.
In order to relate the various Galois actions on the Tate 
module we choose an algebraically closed field $\Omega$
together with embeddings of $A'$ and both $B_i$ into 
$\Omega$ that coincide over $A$. 
For every subring $X$ of $\Omega$ 
let $\Gal_X=\pi_1(\Quot(X),\Omega)$
and denote by $\Gal^\circ_X$ the kernel 
of the cyclotomic character $\Gal_X\to\ZZ_p^*$.

If we write $\TT=T_p\GGG(\Omega)$ by a harmless change
of notation, we have to show that the natural homomorphism
$\rho^\circ_R:\Gal^\circ_R\to\SL(\TT)$ is surjective. Let
\begin{gather*}
\TT_1=T_p\HHH_1(\Omega),\qquad\hspace{0.25em}
\EE=T_p\EEE(\Omega),\qquad\hspace{0.9em}
\EE'=T_p\EEE'(\Omega),\hspace{0.55em}\\
\TT_2=T_p\HHH_2(\Omega),\qquad
\MM=T_p\MMM(\Omega),\qquad
\MM'=T_p\MMM'(\Omega).
\end{gather*}
From \eqref{EqS2}, \eqref{EqS1}, \eqref{EqS3} 
in that order we obtain
the following exact sequences of free $\ZZ_p$-modules
with actions of the designated groups $\Gal_X$
where the action on $\TT$ is induced from the action
of $\Gal_R$ by the natural homomorphism $\Gal_X\to\Gal_R$.
The vertical arrows exist by \eqref{EqS4}.
$$
\xymatrix@M+0.2em@R-1em{
\Gal_{S_1'} &
0 \ar[r] &
\MM' \ar[r] \ar[d] &
\TT \ar[r] \ar@{=}[d] &
\TT_1 \ar[d] \ar[r] & 0 \\
\Gal_{A'} &
0 \ar[r] & 
\MM \ar[r] \ar[d] &
\TT \ar[r] \ar@{=}[d] &
\EE \ar[r] \ar[d] & 0 \\
\Gal_{S_2'} &
0 \ar[r] &
\TT_2 \ar[r] &
\TT \ar[r] &
\EE' \ar[r] & 0
}
$$

Here $\Gal^\circ_{S_1'}$ acts trivially on $\MM'$
and $\Gal_{S_2'}$ acts trivially $\EE'$.
By the known cases $d=1$ and $c=1$ of Theorem \ref{Th2},
the induced homomorphisms $\Gal^\circ_{S_i'}\to\SL(\TT_i)$ 
are surjective.
In many cases this already implies that $\rho^\circ_R$ 
is surjective, but in order to conclude in general
we also need the action of $\Gal^\circ_{A'}$.
Let $U\subseteq\SL(\TT)$ be the unipotent subgroup
that acts trivially on $\MM$ and on $\EE$, thus
$U\cong\ZZ_p^{cd}$. Then $\Gal^\circ_{A'}$ acts on
$\TT$ by a homomorphism
$$
\rho^\circ_{A'}:\Gal^\circ_{A'}\to U.
$$

\begin{Lemma}
\label{Le3}
For a suitable choice of the embedding $\sigma:A\to A'$, 
the homomorphism $\rho^\circ_{A'}$ is surjective.
\end{Lemma}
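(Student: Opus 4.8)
The plan is to identify $\rho^\circ_{A'}$ with the family of Kummer characters of the parameters that classify the deformation $\GGG\otimes_\sigma A'$, and then to use Proposition~\ref{Pr2} to choose $\sigma$ so that these parameters are independent enough.

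First I would set up the dictionary. Write $L'=\Quot(A')=A'[1/p]$, so that $\Gal_{A'}=\Gal(\bar L'/L')$ and $\Gal^\circ_{A'}=\Gal(\bar L'/L'(\mu_{p^\infty}))$. Since $A'=W(F')$ is henselian along $(p)$ with algebraically closed residue field, $\EEE\otimes A'$ is constant and $\MMM\otimes A'\cong\mu_{p^\infty}^d$; hence $\Gal_{A'}$ acts trivially on $\EE$ and by the cyclotomic character on $\MM$, so $\Gal^\circ_{A'}$ acts trivially on both. Therefore $\rho^\circ_{A'}$ is the restriction to $\Gal^\circ_{A'}$ of the class of $0\to\MM\to\TT\to\EE\to 0$ in $\Ext^1_{\Gal_{A'}}(\EE,\MM)=H^1(\Gal_{A'},\Hom_{\ZZ_p}(\EE,\MM))=H^1(\Gal_{A'},\ZZ_p(1))^{\oplus cd}=(\widehat{L'^{*}})^{\oplus cd}$; that is, it is the tuple of the $cd$ Kummer characters of certain $q_{\lambda\mu}\in\widehat{L'^{*}}$. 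Dually, the sequence \eqref{EqS1} base changed along $\sigma$ represents a class in $\Ext^1_{A'}((\QQ_p/\ZZ_p)^c,\mu_{p^\infty}^d)=\Ext^1_{A'}(\QQ_p/\ZZ_p,\mu_{p^\infty})^{\oplus cd}=(\widehat{W(F')^{*}})^{\oplus cd}$, and $\widehat{W(F')^{*}}=1+pW(F')$ because $F'^{*}$ is divisible; its image in $(\widehat{L'^{*}})^{\oplus cd}$ under $\widehat{W(F')^{*}}\hookrightarrow\widehat{L'^{*}}$ is exactly $(q_{\lambda\mu})$. Since $\GGG\otimes F'$ is ordinary — it is the fibre at the generic point of $\Spec\bar R$ — a deformation of it over $W(F')$ is canonically an extension of its etale quotient by its multiplicative part (the maximal multiplicative subgroup exists and lifts that of the special fibre by \cite{Groth} and \cite{deJong}, while the quotient is etale by rigidity); hence $\GGG\otimes_\sigma A'\mapsto(q_{\lambda\mu})$ is a bijection from deformations of $\GGG\otimes F'$ over $W(F')$ to $(1+pW(F'))^{cd}$. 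By Proposition~\ref{Pr2} every such deformation arises from some $\sigma$, so after a suitable choice of $\sigma$ the tuple $(q_{\lambda\mu})$ may be prescribed to be any tuple of principal units.

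It remains to choose the tuple. By Nakayama's lemma, $\rho^\circ_{A'}$ is surjective once its reduction modulo $p$ is, i.e. once the mod-$p$ Kummer characters of the $q_{\lambda\mu}$ are $\FF_p$-linearly independent in $H^1(\Gal^\circ_{A'},\FF_p)$. Inflation--restriction for $1\to\Gal^\circ_{A'}\to\Gal_{A'}\to\ZZ_p^{*}\to 1$ identifies the kernel of the restriction $L'^{*}/(L'^{*})^{p}=H^1(\Gal_{A'},\FF_p(1))\to H^1(\Gal^\circ_{A'},\FF_p)$ with $H^1(\ZZ_p^{*},\FF_p(1))$, which vanishes for odd $p$ since $\ZZ_p^{*}=\Gal(L'(\mu_{p^\infty})/L')$ is abelian and contains an element acting on $\FF_p(1)$ as a scalar $\neq 1$. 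So for odd $p$ it suffices to make the $q_{\lambda\mu}$ linearly independent in $L'^{*}/(L'^{*})^{p}$ already; as the logarithm embeds $F'=W(F')/pW(F')$ into that $\FF_p$-vector space by $1+pt\mapsto\bar t$, one takes $\sigma$ with $q_{\lambda\mu}=1+pt_{\lambda\mu}$ for $t_{\lambda\mu}\in W(F')$ whose residues are $\FF_p$-linearly independent — possible since $F'$ is infinite. (For $p=2$ the restriction kernel is spanned by the classes of $2$ and $-1$, and one makes the same choice while also keeping the residues $\bar t_{\lambda\mu}$ together with $1$ linearly independent.) The substantive point, and where I expect the real work to be, is the dictionary of the second paragraph: that $\rho^\circ_{A'}$ is literally the Kummer tuple of the deformation parameter, so that Proposition~\ref{Pr2} puts it entirely under our control. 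The role of inflation--restriction is only to replace an independence question over the deeply ramified field $L'(\mu_{p^\infty})$, where the valuation is useless because the residue field is algebraically closed, by the trivial one over $L'$.
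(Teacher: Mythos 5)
Your proposal is correct and its architecture coincides with the paper's: Proposition~\ref{Pr2} turns the choice of $\sigma$ into the choice of an arbitrary deformation of the ordinary group $\GGG\otimes F'$ over $W(F')$; the Serre--Tate/Kummer dictionary identifying $\rho^\circ_{A'}$ with the Kummer characters of the parameters $q_{\lambda\mu}\in 1+pW(F')=\hatGm(A')$ is exactly the paper's Lemma~\ref{Le5} (you rightly flag the compatibility of the algebraic extension class with the Galois-theoretic one as the point needing real proof; the paper verifies it by an explicit anticommuting square of $\Ext$-groups); and the conclusion rests on $F'$ being infinite. The one genuine divergence is how you pass from independence in $K^*/(K^*)^p$ to independence in $\Hom(\Gal^\circ_{A'},\FF_p)$, i.e.\ to $K_\infty^*/(K_\infty^*)^p$ with $K_\infty=K[\mu_{p^\infty}]$: you invoke inflation--restriction together with the central-scalar vanishing of $H^1(\ZZ_p^*,\FF_p(1))$ for odd $p$, so that the valuation computation $(1+pA')\cap (K^*)^p\subseteq 1+p^2A'$ can be carried out over $K$ itself, at the cost of a separate (and correct) analysis of the kernel $\langle 2,-1\rangle$ when $p=2$. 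The paper instead descends through the tower of fields $K_n=K[\mu_{p^n}]$, using that $K_\infty/K_2$ is a $\ZZ_p$-extension to stabilise the kernels of $V_2\to V_n$ (where $V_n=K_n^*/(K_n^*)^p$) and that $V_1\to V_3$ has finite kernel, and then does the valuation argument over $K_1$; this is uniform in $p$ and only requires the image of $\hatGm(A')$ in $K_\infty^*/(K_\infty^*)^p$ to be infinite rather than pinning down the kernel exactly. Both arguments are sound; yours is slightly more explicit for odd $p$, while the paper's avoids the case split at $p=2$.
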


We postpone the proof of Lemma \ref{Le3} until section \ref{SecGen}
and continue in the proof of Theorem \ref{Th2}.
Let $U_1$ denote the group of all elements of $\SL(\TT)$
that act trivially on $\MM'$ and on $\TT_1$, 
let $U_2\subseteq\SL(\TT)$ be the group that acts trivially
on $\TT_2$ and on $\EE'$, 
and let $H$ be the image of $\Gal^\circ_R\to\SL(\TT)$.
Then $H$ contains $U$ by Lemma \ref{Le3}, 
so $H\cap U_1$ contains $U\cap U_1$.
Since $\Gal^\circ_{S_1'}\to\SL(\TT_1)$ is surjective,
$H\cap U_1$ is invariant under the conjugation
action of $\SL(\TT_1)$ on $U_1$.
Thus $H\cap U_1=U_1$ and similarly $H\cap U_2=U_2$.
It follows that $H$ contains the (pointwise) stabilisers
of $\MM'\subset\TT$ and of
$\EE^{\prime\vee}\subset\TT^\vee$. 
These generate $\SL(\TT)$ as is easily shown
by straightforward considerations of matrices.
\end{proof}

The following example shows that the use 
of Lemma \ref{Le3} cannot be avoided.

\begin{Example}
There is a subgroup $H$ of $\GL_4(\FF_2)$
of index $8$ with the following property:
If $P_1$ and $P_2$ denote the standard parabolic 
subgroups of $\GL_4$ of types $(1,3)$ and $(3,1)$ 
then the natural projections $\pi_i:H\cap P_i\to\GL_3(\FF_2)$
are surjective. The group $H$ is generated 
by the following matrices $A_1,\ldots, A_6$.
\begin{gather*}
\left(
\begin{smallmatrix}
1 & 0 & 0 & 0 \\
1 & 1 & 0 & 1 \\
0 & 0 & 1 & 1 \\
0 & 0 & 0 & 1 
\end{smallmatrix}
\right)\;\;
\left(
\begin{smallmatrix}
1 & 0 & 0 & 1 \\
0 & 1 & 0 & 0 \\
0 & 1 & 1 & 1 \\
0 & 0 & 0 & 1 
\end{smallmatrix}
\right)\;\;
\left(
\begin{smallmatrix}
1 & 1 & 0 & 0 \\
0 & 1 & 0 & 0 \\
0 & 0 & 1 & 1 \\
0 & 0 & 0 & 1 
\end{smallmatrix}
\right)\;\;
\left(
\begin{smallmatrix}
1 & 0 & 0 & 1 \\
0 & 1 & 1 & 0 \\
0 & 0 & 1 & 0 \\
0 & 0 & 0 & 1 
\end{smallmatrix}
\right)\;\;
\left(
\begin{smallmatrix}
1 & 1 & 1 & 0 \\
0 & 1 & 0 & 0 \\
0 & 0 & 1 & 0 \\
0 & 0 & 1 & 1 
\end{smallmatrix}
\right)\;\;
\left(
\begin{smallmatrix}
1 & 1 & 0 & 1 \\
0 & 1 & 0 & 0 \\
0 & 1 & 1 & 0 \\
0 & 0 & 0 & 1 
\end{smallmatrix}
\right)
\end{gather*}
It is easy to see that $\pi_2(A_1),\ldots,\pi_2(A_4)$ 
generate $\GL_3(\FF_2)$ and that $\pi_1(A_3),\ldots,\pi_1(A_6)$
also generate this group.
The index of $H$ in $\GL_4(\FF_2)$
has been computed only electronically.
It would be interesting to have a definition of 
$H$ without generators.
\end{Example}

\section{Deformations of $p$-divisible groups}
\label{SecDef}

Before proving Lemmas \ref{Le1} and \ref{Le3}
let us recall some aspects of the
deformation theory of $p$-divisible groups.
Let $G$ be a $p$-divisible group over an arbitrary 
ring $R$ in which $p$ is nilpotent
and write $\Lambda_G=\Hom_R(\Lie G,\omega_{G^\vee})$.
If $R=S/I$ where $S$ is an $I$-adically complete ring
let $\Def_{S/R}(G)$ denote the set of isomorphism
classes of lifts of $G$ to $S$.

\begin{Thm}
\label{ThD}
If $I^2=0$ then $\Def_{S/R}(G)$ is naturally 
a torsor under the $R$-module
$$
\Hom_R(\omega_{G^\vee},I\otimes\Lie G)=
\Hom_R(\Lambda_G,I).
$$ 
\end{Thm}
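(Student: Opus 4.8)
The plan is to reduce the statement to Messing's deformation theory of $p$-divisible groups via the crystalline nature of the Dieudonné functor, or equivalently to Illusie's cotangent-complex formulation. First I would recall that for a $p$-divisible group $G$ over $R$ with $p$ nilpotent, Messing's theory attaches to $G$ a crystal; a lift of $G$ along a nilpotent PD-thickening $S \to R$ corresponds to a lift of the Hodge filtration $\omega_{G^\vee} \subseteq \mathbb D(G)$ inside the evaluation of the crystal on $S$. When $I^2 = 0$ the ideal $I$ has the trivial (divided-power) structure $\gamma_n = 0$ for $n \geq 2$, so $S \to R$ is automatically a PD-thickening and Messing's hypotheses are met. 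Thus I would fix one lift $G_0$ over $S$ as a base point, which identifies $\Def_{S/R}(G)$ with the set of liftings of the Hodge filtration; the base point shows the set is nonempty, and the remaining task is to make the torsor structure explicit and identify the acting module.

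Next I would carry out the linear-algebra step: two liftings of the filtration differ by a homomorphism from the rank-$d$ locally free $R$-module $\omega_{G^\vee}$ into the ambient module modulo the fixed lift, which after choosing the base point is $I \otimes_R \mathrm{Lie}\,G$. Here I use that $\mathbb D(G)(S)$ is an extension of $\mathrm{Lie}\,G$ (lifted) by $\omega_{G^\vee}$ (lifted), and that modulo $I$ it reduces to $\mathbb D(G)(R)$; because $I^2 = 0$, the "deformation directions" are exactly $\Hom_R(\omega_{G^\vee}, I \otimes_R \mathrm{Lie}\,G)$. That this group acts simply transitively on the liftings, independently of the base point, is the standard torsor argument: changing the base point changes the identification by a translation, so the action is canonical. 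Finally, since $\mathrm{Lie}\,G$ and $\omega_{G^\vee}$ are locally free, the canonical pairing identifies $\Hom_R(\omega_{G^\vee}, I \otimes_R \mathrm{Lie}\,G)$ with $\Hom_R(\Lambda_G, I)$ where $\Lambda_G = \Hom_R(\mathrm{Lie}\,G, \omega_{G^\vee})$, giving the second description in the statement.

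The main obstacle is not the existence of the base point but ensuring the identification of $\Def_{S/R}(G)$ with filtration-liftings is bijective and that the torsor structure is \emph{natural} (functorial in $G$ and compatible with base change in the pair $S \to R$); this is where one must invoke the full strength of Messing's crystalline Dieudonné theory (\cite{Messing}) or Illusie's comparison (\cite{Illusie}) rather than a hands-on computation, since a priori a lift of the filtration could fail to come from an actual lift of the $p$-divisible group. Once that equivalence is in place, the rest is the routine torsor bookkeeping sketched above, and the only mild care needed is the locally-free hypothesis used to pass between $\Hom_R(\omega_{G^\vee}, I \otimes_R \mathrm{Lie}\,G)$ and $\Hom_R(\Lambda_G, I)$.
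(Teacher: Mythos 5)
Your proposal follows essentially the same route as the paper, which treats this as a classical fact and sketches exactly your argument: by Messing's crystalline deformation theorem (or Illusie's cotangent-complex version), lifts of $G$ to $S$ correspond to lifts of the Hodge filtration $\omega_{G^\vee}\subset\DD(G)_R$ to $\DD(G)_S$, and these form a torsor under $\Hom_R(\omega_{G^\vee},I\otimes\Lie G)=\Hom_R(\Lambda_G,I)$. The one point to be careful about, which the paper flags explicitly with its parenthetical ``except for the existence of lifts,'' is that the crystalline route by itself does not furnish the non-emptiness of $\Def_{S/R}(G)$ --- your ``fix one lift $G_0$ as a base point'' presupposes it --- so for existence one should appeal to Illusie, th\'eor\`eme 4.4.
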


This is classical and follows either from 
\cite{Illusie}, th\'{e}or\`{e}me 4.4 and corollaire 4.7
or (except for the existence of lifts) from the crystalline
deformation theorem \cite{Messing}, V, Theorem 1.6,
because the set of lifts to $\DD(G)_S$ of the Hodge filtration
$$
0\longrightarrow\omega_{G^\vee}\overset i\longrightarrow\DD(G)_R
\overset\pi\longrightarrow\Lie G\longrightarrow 0
$$
is a torsor under $\Hom_R(\omega_{G^\vee},I\otimes\Lie G)$;
here $\DD(G)$ is the covariant Dieudonn\'{e} crystal
defined in loc.\ cit.
Both constructions give the same action of 
$\Hom_R(\Lambda_G,I)$ on $\Def_{S/R}(G)$
but we could not find a reference for this fact
(and will not use it).
As a formal consequence of Theorem \ref{ThD}, 
for every $p$-divisible group $G$ over $R$ as above 
there is a `Kodaira-Spencer' homomorphism  
$\kappa_G':\omega_{G^\vee}\to\Omega_R\otimes\Lie G$,
or equivalently
$$
\kappa_G:\Lambda_G\to\Omega_R,
$$
uniquely determined by the following property.
For any ring homomorphism $f:R\to A$ where 
$A=B/I$ such that $I^2=0$ denote by $\Lift_{B/A}(f)$ 
the set of ring homomorphisms $R\to B$ lifting $f$,
which is either the empty set or a torsor under
the $A$-module $\Hom_R(\Omega_R,I)$. 
Then the obvious map
\begin{equation}
\label{EqD1}
\Lift_{B/A}(f)\to\Def_{B/A}(G\otimes_RA)
\end{equation}
is equivariant with respect to the homomorphism
$\Hom_R(\Omega_R,I)\to\Hom_R(\Lambda_G,I)$
induced by $\kappa_G$. The homomorphism $\kappa_G$ 
is functorial in $R$ in the obvious sense.
If one uses the crystalline construction of the
torsor structure in Theorem \ref{ThD} 
then $\kappa'_G$ can be written down directly in terms of 
the connection $\nabla:\DD(G)_R\to\Omega_R\otimes\DD(G)_R$,
namely $\kappa'_G=(\id\otimes\pi)\circ\nabla\circ i$.

A homomorphism of $p$-divisible groups $G\to H$ over $R$
induces a morphism of arrows (a commutative square)
$\kappa'_G\to\kappa'_H$.
In the special case of an exact sequence 
of $p$-divisible groups $0\to M\to G\to H\to 0$ 
where $M$ is of multiplicative type, 
thus $\omega_{G^\vee}\cong\omega_{H^\vee}$, 
this translates into a commutative triangle 
with split injective $\lambda$ :
\begin{equation}
\label{EqD2}
\xymatrix@M+0.2em@R-1em@C-1em{
{\Lambda_H} \ar@{^(->}[rr]^{\lambda} \ar[dr]_{\kappa_H} &&
{\Lambda_G} \ar[dl]^{\kappa_G} \\ 
& {\Omega_R} 
}
\end{equation} 

If $G$ is a $p$-divisible group over a field $k$ 
of characteristic $p$ and $\GGG$ is a deformation of $G$ 
over a complete local noetherian $k$-algebra $R$ with
residue field $k$ we may consider the following 
composite homomorphism $\bar\kappa_\GGG$.
$$
\bar\kappa_\GGG:
\Lambda_G\cong\Lambda_\GGG\otimes k
\xrightarrow{\kappa_\GGG\otimes\id}
\Omega_R\otimes k\to\Omega_{R/k}\otimes k
\cong\Fm_R/\Fm_R^2
$$
The deformation $\GGG$ is universal if and only if
$R$ is regular and $\bar\kappa_\GGG$ is bijective;
let us call $\GGG$ versal if $R$ is regular and 
$\bar\kappa_\GGG$ is injective.
In the universal case $\kappa_\GGG$ induces
an isomorphism $\Lambda_\GGG\cong\hatOmega_{R/k}$
because both modules are free over $R$.
If $\GGG$ is universal and $k$ is perfect then
$\kappa_\GGG$ is an isomorphism $\Lambda_\GGG\cong\Omega_R$ 
because in that case $\Omega_R\cong\hatOmega_{R/k}$.

As announced earlier let us conclude this section
with a proof of Proposition \ref{PrO}.

\begin{proof}[Proof of Proposition \ref{PrO}]
We use and recall the transitivity argument 
of \cite{Oort-PM}, Proposition 2.8.
Let $K$ be the residue field of $\bar R$ at $x$,
let $\tilde R=\bar R[[t_1,\ldots,t_{cd}]]$
and $T=K[[t_1,\ldots,t_{cd}]]$.
By \cite{deJong} 2.2.2 and 2.2.4
or by the theory of displays
there is a deformation $\tilde\GGG$ over $\tilde R$
of $\GGG\otimes\bar R$ so that 
$\tilde\GGG_T=\tilde\GGG\otimes_{\tilde R}T$
is the universal deformation in equal characteristic 
of its special fibre $\GGG\otimes_RK$. 
Universality of $\GGG$ gives a homomorphism 
$\varphi:\bar R\to\tilde R$ such that
$\tilde\GGG\cong\GGG\otimes_\varphi\tilde R$
as deformations of $G$.
Since $\bar R\to \tilde R\to\bar R$ is the identity,
there is a unique local homomorphism $\psi:\bar R_x\to T$
making the following commute.
$$
\xymatrix@M+0.2em@R-1em{
\bar R \ar[r]^-\varphi \ar[d] &
\tilde R=\bar R[[t_1,\ldots,t_{cd}]] \ar[d] \\
\bar R_x \ar[r]^-\psi &
T=K[[t_1,\ldots,t_{cd}]]
}
$$
It follows that the universal group $\tilde\GGG_T$
is isomorphic to $(\GGG\otimes\bar R_x)\otimes_\psi T$;
in particular, the inverse images of the various $V_{\beta,x}$ 
under $\Spec\psi:\Spec T\to\Spec\bar R_x$ form the
Newton stratification given by $\tilde\GGG_T$.
Since that stratification has the required properties 
by \cite{Oort-Ann}, Theorem 3.2, the proposition follows
if we show that $T$ is isomorphic via $\psi$ to a 
power series ring over the completion of $\bar R_x$.

Let $\Fm\subset\bar R_x$ and $\Fn\subset T$ be 
the maximal ideals. Since $\bar R_x$ is regular 
and $\psi$ induces an isomorphism on residue fields,
it suffices that $\Fm/\Fm^2\to\Fn/\Fn^2$ is injective.
Now $\Fm/\Fm^2$ is a submodule of $\Omega_{\bar R_x}\otimes K$ 
and $\Fn/\Fn^2$ is isomorphic to $\hatOmega_{T/K}\otimes K$.
Hence it is enough to show that  
$\Omega_{\bar R_x}\otimes_\psi T\to\hatOmega_{T/K}$
is an isomorphism. This follows because clearly
$\Lambda_{\GGG\otimes\bar R_x}\otimes_\psi T\to\Lambda_{\tilde\GGG_T}$
is an isomorphism and the Kodaira-Spencer homomorphisms 
$\Lambda_{\GGG\otimes\bar R_x}\to\Omega_{\bar R_x}$
and $\Lambda_{\tilde\GGG_T}\to\hatOmega_{T/K}$ 
are isomorphisms by universality of $\GGG$ and of $\tilde\GGG_T$.
\end{proof}

\section{Universality over completions}
\label{SecUni}

This section is devoted to the proof of Lemma \ref{Le1},
but let us consider a more general situation.
Assume that $G$ is a $p$-divisible group
over a perfect field $k$ of characteristic $p$, let $R$ 
be its universal deformation ring over $k$, thus 
$R\cong k[[t_1,\ldots,t_{cd}]]$, and denote by
$\GGG$ the universal deformation over $R$.

For an arbitrary prime $\Fp\in\Spec R$
we consider the complete local ring $S=\hatR_\Fp$ 
with residue field $K=S/\Fm_S$.
The maximal subgroup of multiplicative type 
$M$ of $\GGG\otimes K$ lifts uniquely to a subgroup 
of multiplicative type $\MMM$ of $\GGG\otimes S$. 
The quotient $\HHH=(\GGG\otimes S)/\MMM$ 
is a deformation over $S$ of the $p$-divisible group 
$H=(\GGG\otimes K)/M$ over $K$. 
To ask whether $\HHH$ is a universal or versal 
deformation of $H$ makes sense only after a structure
of $K$-algebra is chosen on $S$, and in general 
the answer depends on the choice 
(but not in the special case of Lemma \ref{Le1}).

Denote by $\Sigma$ (or $\bar\Sigma$) the set of all 
$k$-algebra homomorphisms $\sigma:K\to S$ 
(or $\bar\sigma:K\to S/\Fm_S^2$) 
lifting the identity of $K$.
Since $k$ is perfect, $\Sigma$ is non-empty 
and the reduction map $\Sigma\to\bar\Sigma$ is surjective. 
The set $\bar\Sigma$ is a torsor under the 
finite dimensional $K$-vector space 
$\Hom_K(\Omega_K,\Fm_S/\Fm_S^2)$, 
so the Zariski topology on the vector space 
induces a well-defined topology on $\bar\Sigma$.

\begin{Prop}
\label{Pr1}
There is an open subset $U$ of\/ $\bar\Sigma$ such that 
$\HHH$ is versal with respect to some $\sigma\in\Sigma$ 
if and only if its reduction $\bar\sigma$ lies in $U$.  
The set $U$ is non-empty if and only if 
$\dim(S)\geq\dim_K(\Lambda_H)$.
We have $U=\bar\Sigma$ if and only if  
$\kappa_H:\Lambda_H\to\Omega_K$ is zero.
\end{Prop}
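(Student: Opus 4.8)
The plan is to translate the question of versality of $\HHH$ with respect to a section $\sigma\in\Sigma$ into a linear-algebra question about the composite $\bar\kappa_{\HHH,\sigma}$, and then to analyze how this map varies with $\sigma$. Recall from Section \ref{SecDef} that $\HHH$ is versal with respect to $\sigma$ if and only if $S$ is regular (which it is, since here $\Fp\in\Spec R$ with $R$ regular, so $S=\hatR_\Fp$ is regular) and the map
$$
\bar\kappa_{\HHH,\sigma}:\Lambda_H\cong\Lambda_\HHH\otimes_S K
\xrightarrow{\kappa_\HHH\otimes\id}
\Omega_S\otimes_S K\to\Omega_{S/\sigma(K)}\otimes_S K\cong\Fm_S/\Fm_S^2
$$
is injective, where the middle map depends on $\sigma$ because the $K$-algebra structure on $S$ is the one given by $\sigma$. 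So the first step is to make the $\sigma$-dependence explicit: the surjection $\Omega_S\otimes_S K\to\Fm_S/\Fm_S^2$ has kernel the image of $\sigma^*:\Omega_K\to\Omega_S\otimes_S K$ (for $\sigma\in\Sigma$ lifting $\id_K$), and this image depends only on the reduction $\bar\sigma\in\bar\Sigma$, since $\Omega_S\otimes_S K$ depends only on $S/\Fm_S^2$. Thus $\bar\kappa_{\HHH,\sigma}$ depends only on $\bar\sigma$, and it is legitimate to speak of $\bar\kappa_{\HHH,\bar\sigma}$.

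Next I would set up the universal version of this linear algebra. Fix once and for all the $R$-module homomorphism $\kappa_\HHH:\Lambda_\HHH\to\Omega_S$ and reduce mod $\Fm_S$ to get a fixed $K$-linear map $\Lambda_H\to\Omega_S\otimes_S K$; call its composition with the canonical projection $\pi:\Omega_S\otimes_S K\to\Fm_S/\Fm_S^2$ the map $\ell$. For varying $\bar\sigma$ the map $\bar\kappa_{\HHH,\bar\sigma}$ is obtained by composing $\ell$ with an automorphism of $\Fm_S/\Fm_S^2$ determined by $\bar\sigma$, but more usefully one can phrase injectivity of $\bar\kappa_{\HHH,\bar\sigma}$ as: the composite $\Lambda_H\to\Omega_S\otimes_S K$ is injective \emph{and} its image meets the line-subspace $\mathrm{im}(\bar\sigma^*)$ trivially. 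Now $\bar\sigma^*$ ranges over an affine family: as $\bar\sigma$ varies in the torsor $\bar\Sigma$ under $\Hom_K(\Omega_K,\Fm_S/\Fm_S^2)$, the subspace $\mathrm{im}(\bar\sigma^*)\subseteq\Omega_S\otimes_S K$ varies in a way that is polynomial (indeed affine-linear) in the torsor coordinate. Hence the condition ``$\mathrm{image\ of\ }\Lambda_H$ meets $\mathrm{im}(\bar\sigma^*)$ nontrivially'' is a closed condition on $\bar\sigma$ — it is the vanishing of suitable maximal minors — so its complement $U$ is open in $\bar\Sigma$. This gives the existence of the open $U$.

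For the three characterizations: (a) $U\neq\emptyset$ iff $\dim S\ge\dim_K\Lambda_H$. One inclusion is clear: if $\bar\kappa_{\HHH,\bar\sigma}$ is injective then $\dim_K\Lambda_H\le\dim_K\Fm_S/\Fm_S^2=\dim S$ by regularity. Conversely, when $\dim S\ge\dim_K\Lambda_H$ one must produce a good $\bar\sigma$; the key input is that $\Lambda_\HHH\to\Omega_S$ factors through an injection after tensoring suitably — more precisely one uses that $\Omega_{S/k}\cong\Omega_K\otimes_K S\oplus(\text{relative part})$ with the relative part free of rank $\dim S$, together with the fact that a generic choice of splitting $\bar\sigma^*$ avoids the fixed finite-dimensional image of $\Lambda_H$ provided there is enough room, i.e.\ provided $\dim_K\Lambda_H\le\dim S$; here I would invoke that the image of $\Lambda_H$ in $\Omega_S\otimes_S K$ already has trivial intersection with \emph{some} complement of $\Omega_K\otimes_K K$ exactly under this dimension bound, which follows from the Kodaira–Spencer theory over $R$ ($\kappa_\GGG$ is an isomorphism since $\GGG$ is the universal deformation and $k$ is perfect) by base change along $R\to S$. (b) $U=\bar\Sigma$ iff $\kappa_H:\Lambda_H\to\Omega_K$ is zero. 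The map $\kappa_H$ is the ``horizontal'' component of $\kappa_\HHH\otimes\id$, i.e.\ the composite $\Lambda_H\to\Omega_S\otimes_S K\to\Omega_K$ where the last arrow is the splitting dual to $\bar\sigma^*$ — but in fact this composite is $\sigma$-independent because $\kappa_H$ is intrinsic to $H$ over $K$, being functorial (Section \ref{SecDef}). If $\kappa_H=0$, then the image of $\Lambda_H$ lands in the relative part $\Omega_{S/K}\otimes_S K$, which is a fixed complement disjoint from every $\mathrm{im}(\bar\sigma^*)$, so $\bar\kappa_{\HHH,\bar\sigma}$ is just the fixed injective-or-not map independent of $\bar\sigma$, and one checks it is injective using again that $\kappa_\GGG$ is an isomorphism and $\HHH$ is a quotient of $\GGG\otimes S$ by a multiplicative subgroup (so \eqref{EqD2} applies with $\lambda$ split injective): the point is that $\Lambda_H\hookrightarrow\Lambda_{\GGG\otimes S}\xrightarrow{\sim}\Omega_S$ composed with projection to $\Omega_{S/K}$ is injective precisely because the ``missing'' directions correspond to the lift of $M$, which is unique. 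Conversely if $\kappa_H\neq 0$, pick $\bar\sigma$ so that $\mathrm{im}(\bar\sigma^*)$ contains a nonzero vector in the image of $\Lambda_H$ under $\Lambda_H\xrightarrow{\kappa_H}\Omega_K\xrightarrow{(\bar\sigma^*)}\Omega_S\otimes_S K$; such $\bar\sigma$ exists because $\bar\Sigma$ surjects onto the space of splittings and $\kappa_H\neq 0$ supplies a vector to hit. Then $\bar\kappa_{\HHH,\bar\sigma}$ has nontrivial kernel, so $\bar\sigma\notin U$, whence $U\neq\bar\Sigma$.

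The main obstacle I anticipate is the converse direction of (a): showing that the dimension bound $\dim S\ge\dim_K\Lambda_H$ actually \emph{suffices} to produce a versal $\sigma$, rather than merely being necessary. This requires knowing that the image of $\Lambda_H$ in $\Omega_S\otimes_S K$ is ``in general position'' relative to the family of splittings — equivalently that the composite $\Lambda_H\to\Omega_S\otimes_S K\to\Omega_K$ (which is $\kappa_H$) does not, by itself, force a relation. The clean way to see this is to exploit that over $R$ itself the Kodaira–Spencer map for $\GGG$ is an isomorphism $\Lambda_{\GGG}\cong\Omega_R$, base-change to $S$, and use the exact sequence $0\to\MMM\to\GGG\otimes S\to\HHH\to 0$ together with \eqref{EqD2} to identify $\Lambda_H$ with a direct summand of $\Omega_S$ complementary to the part coming from the deformation directions of $\MMM$ — and $\MMM$ has unique lift, so contributes nothing, giving $\Lambda_H\cong\Omega_S$ when $\dim S$ is exactly right, and the desired transversality in general. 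Once that identification is in place the three statements become bookkeeping about the torsor $\bar\Sigma$ and its action on splittings of $0\to\Omega_K\otimes_K S\to\Omega_{S}\to\Omega_{S/K}\to 0$.
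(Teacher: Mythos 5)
Your proposal is correct and follows essentially the same route as the paper: versality is translated into injectivity of $\bar\kappa_{\HHH,\bar\sigma}$, the source is embedded into $\Omega_S\otimes_S K$ via the split injection $\lambda$ of \eqref{EqD2} together with the isomorphism $\kappa_{\GGG\otimes S}$, and the condition becomes that the fixed image of $\Lambda_H$ meets the varying subspace $\mathrm{im}(\bar\sigma^*)$ trivially, an affine-linear family over the torsor $\bar\Sigma$, after which all three assertions are linear algebra. The only step you gloss over is the identification $\Omega_R\otimes_R S\cong\Omega_S$ (the paper's Lemma \ref{Le4}), which does require an argument since $S$ is a completion and not of finite type over $R$.
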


Note that if $\dim(S)=\dim_K(\Lambda_H)$ then 
`versal' is equivalent to `universal'.

\begin{Lemma}
\label{Le4}
The natural homomorphism 
$\Omega_R\otimes_RS\to\Omega_S$ 
is an isomorphism.
\end{Lemma}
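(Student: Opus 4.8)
The plan is to compare the two sides by reducing modulo powers of the maximal ideal and using that $R$ is a power series ring over the perfect field $k$. Since $R \cong k[[t_1,\ldots,t_{cd}]]$, the module $\Omega_R$ (the module of Kähler differentials over $k$, which equals the continuous/completed one here since $k$ is perfect and $R$ is a power series ring) is free of rank $cd$ with basis $dt_1,\ldots,dt_{cd}$. So $\Omega_R \otimes_R S$ is free over $S$ of rank $cd$ with the same basis, and the claim becomes that $\Omega_S$ — the module of Kähler differentials of $S = \widehat{R}_\Fp$ over $k$ — is also free of rank $cd$ on $dt_1,\ldots,dt_{cd}$, with the natural map being the evident one.

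First I would handle the localisation step: $\Omega_{R_\Fp} = \Omega_R \otimes_R R_\Fp$, which is standard ($\Omega$ commutes with localisation). The substantive point is then completion: I want $\Omega_S \cong \Omega_{R_\Fp} \otimes_{R_\Fp} S$ where $S = \widehat{R_\Fp}$. This is where the only real work lies. One clean route: both $R_\Fp$ and $S$ are noetherian, $S$ is faithfully flat over $R_\Fp$, and both are essentially of finite type / complete over $k$ with $k$ perfect. I would argue via the second exact sequence for the surjection of $k$-algebras $k[[t_1,\ldots,t_{cd}]] = R \to S$ — wait, that is not surjective; instead use that $S$ is a quotient of a power series ring $k[[t_1,\ldots,t_{cd}]]$ only after completing along $\Fm_S$, which it is not in general. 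So the better approach is: show the map $u\colon \Omega_R\otimes_R S \to \Omega_S$ is an isomorphism by checking it modulo $\Fm_S^n$ for all $n$ and invoking completeness, or more efficiently by Nakayama.

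Concretely, I would proceed as follows. Both $M := \Omega_R \otimes_R S$ and $N := \Omega_S$ are finitely generated $S$-modules ($S$ is noetherian, and $\Omega_S$ is finitely generated because $S$ is a complete noetherian local ring with perfect — hence in particular for which $\Omega$ behaves well — coefficient field; alternatively $\Omega_S$ is a quotient of $\Omega_{k[[t_1,\ldots,t_{cd}]]}\otimes S$ via a coefficient-field presentation of $S$, hence finitely generated). The map $u$ is surjective: $S$ is generated as a $k$-algebra topologically by the images of $t_1,\ldots,t_{cd}$ together with a coefficient field $K$, but $K$ is separable (even algebraic? no — $K$ is the residue field at $\Fp$, which is of finite type over $k$, hence separably generated but with a transcendence part) — here I must be slightly careful: $\Omega_{K/k}$ is nonzero in general. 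However, those differentials already come from the $dt_i$: a transcendence basis of $K/k$ is the image of a suitable subset of the $t_i$ (after permuting coordinates so that $\Fp$ is generated by the remaining ones up to completion), so $dK$ lies in the image of $\{dt_i\}$. Hence $u$ is surjective. For injectivity, I would compare ranks/lengths after tensoring down: $M$ is free of rank $cd$ over $S$, and I claim $N$ is too. Since $S$ is a regular local ring of dimension $\dim S$ with fraction field of transcendence degree $cd - \dim S$ over... — the cleanest statement is that for a regular local $k$-algebra $S$ essentially of finite type and then completed, with $k$ perfect, $\Omega_{S/k}$ is free of rank equal to $\dim S + \operatorname{trdeg}(\kappa(S)/k) = cd$. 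I would cite or reprove this via $0 \to \Fp R_\Fp/\Fp^2 R_\Fp \to \Omega_{R}\otimes\kappa(\Fp) \to \Omega_{\kappa(\Fp)/k}\to 0$ being exact on the left because $R_\Fp$ is a localisation of a smooth $k$-algebra (the Jacobian criterion / $0\to I/I^2\to \Omega\to\Omega\to 0$ exactness for smooth ambient). This forces $\Omega_{R_\Fp}$ to be free of rank $cd$, and the same sequence for $S$ (using that $S$ is formally smooth over $k$ — being a power series ring over a separable extension, since $k$ perfect makes every finitely generated field extension separably generated) gives $\Omega_S$ free of rank $cd$ as well. A surjection of free $S$-modules of the same finite rank is an isomorphism, so $u$ is an isomorphism.

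The main obstacle, and the step I would spend the most care on, is the treatment of the coefficient field: I must make sure the completed differentials $\widehat{\Omega}$ versus the naive $\Omega$ issue does not bite, and that the $cd$ generators $dt_1,\ldots,dt_{cd}$ really do generate $\Omega_S$ (equivalently, that $S$ is formally smooth over $k$, which uses $k$ perfect in an essential way, and that $\Omega_R \to \Omega_S$ hits the "horizontal" differentials $dK$ of the residue field). Everything else — localisation commuting with $\Omega$, surjection of equal-rank free modules being iso — is formal. If a cleaner citation is available, I would simply invoke that for a regular local ring $S$ that is a localisation-completion of a smooth affine $k$-algebra with $k$ perfect, $\Omega_{S/k}$ is finite free and base change along $R_\Fp \to S$ holds; but the argument above is self-contained.
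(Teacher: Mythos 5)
Your overall strategy for the completion step (show the map is a surjection of finite free $S$-modules of the same rank $cd$) can be made to work, but as written it rests on several false intermediate claims, all stemming from treating $R$ and $S$ as if they were essentially of finite type over $k$. The residue field $K=\kappa(\Fp)$ is \emph{not} finitely generated over $k$: for any non-maximal $\Fp$ it has infinite transcendence degree over $k$ (already $\operatorname{Frac}(k[[t_1,\dots,t_{cd}]])$ does), so there is no transcendence basis of $K/k$ among the images of the $t_i$; $R_\Fp$ is not a localisation of a smooth finite-type $k$-algebra, so the Jacobian-criterion justification for left-exactness of the conormal sequence does not apply as stated; $S$ is not a quotient of any $k[[x_1,\dots,x_N]]$ (its residue field is too large); and the coefficient field $K\subset S$ is not perfect. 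These are not cosmetic slips: the same reasoning, applied verbatim in characteristic zero, would ``prove'' that $\Omega_{\QQ[[t]]/\QQ}$ is free of rank one on $dt$, which is false. Some genuinely characteristic-$p$ input is therefore required both for the finite generation of $\Omega_S$ (without which the Nakayama step behind your surjectivity claim has no basis) and for its freeness of rank $cd$.

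The missing ingredient is the finite $p$-basis. Because $k$ is perfect, $\{t_1,\dots,t_{cd}\}$ is a $p$-basis of $R$; a finite $p$-basis is inherited by the localisation $R_\Fp$, by its residue field $K$ (since $\Omega_K$ is a quotient of $\Omega_{R_\Fp}\otimes K$ and a field of characteristic $p$ with finite-dimensional $\Omega$ has a finite $p$-basis), and hence by $S\cong K[[u_1,\dots,u_m]]$; and a characteristic-$p$ ring with a finite $p$-basis has module of K\"ahler differentials finite and free on that basis. This is exactly what the paper uses: once $\Omega_S$ and $\Omega_{R_\Fp}$ are known to be finite modules, both $\Omega_S$ and $\Omega_{R_\Fp}\otimes_{R_\Fp}S$ are identified with $\varprojlim_n\Omega_{S/\Fm_S^n}$, which finishes the proof with no rank count at all. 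If you insert the $p$-basis fact, your endgame can also be completed, but note that computing the rank of $\Omega_S$ as $cd$ still requires the left-exactness of $0\to\Fp R_\Fp/\Fp^2R_\Fp\to\Omega_{R_\Fp}\otimes K\to\Omega_K\to 0$, which must be justified by the formal smoothness of the regular local ring $R_\Fp$ over the perfect field $k$ rather than by any finite-type smoothness; the inverse-limit identification sidesteps this entirely.
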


\begin{proof}
Since this is true with $R_\Fp$ in place of $S$, it suffices 
that $\Omega_{R_\Fp}\otimes_{R_\Fp} S\to\Omega_S$ is an isomorphism. 
The ring $S$ has a finite $p$-basis because
it is isomorphic to a power series ring over the field $K$
which has a finite $p$-basis because this holds for $R$.
Thus $\Omega_S$ is a finite $S$-module, whence
$$
\Omega_S\cong\varprojlim_n(\Omega_S/\Fm_S^n\Omega_S)
\cong\varprojlim_n\Omega_{S/\Fm_S^n}.
$$
Using that $\Omega_{R_\Fp}$ is a finite $R_\Fp$-module, 
the same reasoning shows that $\Omega_{R_\Fp}\otimes_{R_\Fp} S$
is isomorphic to $\varprojlim\Omega_{S/\Fm_S^n}$ as well.
\end{proof}

\begin{proof}[Proof of Proposition \ref{Pr1}]
Assume that $\sigma:K\to S$ is given.
Since $S$ is regular, $\HHH$ is versal with respect to
$\sigma$ if and only if the homomorphism $\bar\kappa_\HHH$ 
defined by the upper triangle of the following commutative
diagram is injective. The lower triangle is \eqref{EqD2}
and is independent of $\sigma$. The homomorphism 
$\kappa_{\GGG\otimes S}:\Lambda_{\GGG\otimes S}\to\Omega_S$ 
is an isomorphism because it can be identified with 
$\kappa_\GGG\otimes\id:\Lambda_\GGG\otimes_R S\to\Omega_R\otimes_R S$
by Lemma \ref{Le4} and because $\kappa_\GGG$ 
is an isomorphism as $\GGG$ is universal and $k$ is perfect.
$$
\xymatrix@M+0.2em@C+1em{
\Lambda_\HHH\otimes_SK \ar[r]^-{\bar\kappa_\HHH} 
\ar@{_(->}[d]_\lambda \ar[dr]^{\kappa_\HHH\otimes\id} & 
\Omega_{S/K}\otimes_SK \\
\Lambda_{\GGG\otimes S}\otimes_S K 
\ar[r]_-{\kappa_{\GGG\otimes S}\otimes\id}^\cong &
\Omega_S\otimes_SK \ar[u]_v
}
$$
In order to see how the kernel of $v$ varies with $\sigma$
we write down two standard exact sequences for 
modules of differentials.
The first depends on $\sigma$, the second does not.
\begin{gather*}
0\longrightarrow\Omega_K\overset u\longrightarrow\Omega_S\otimes_S K
\overset v\longrightarrow\Omega_{S/K}\otimes_S K\longrightarrow 0 \\
0\longrightarrow\Fm_S/\Fm_S^2\overset d\longrightarrow\Omega_S\otimes_S K
\overset\pi\longrightarrow\Omega_K\longrightarrow 0
\end{gather*}
Clearly $v\circ d$ is an isomorphism and
$\pi\circ u=\id$, which proves exactness on the left.
If $\sigma$ is changed so that $\bar\sigma$ is altered
by $\delta:\Omega_K\to\Fm_S/\Fm_S^2$ then $u$
changes by $d\circ\delta$. 
It follows that $\bar\Sigma$ is bijective to the set of
homomorphisms $u$ with $\pi\circ u=\id$, 
which reduces the proposition to linear algebra.
Namely, $\bar\kappa_\HHH$ is injective if and only if
the images of $u$ and of $\kappa_\HHH\otimes\id$
in $\Omega_S\otimes K$ have zero intersection.
This condition defines an open subset $U$ of $\bar\Sigma$ 
that is non-empty if and only if
$\dim_K(\Lambda_H)\leq\dim_K(\Omega_{S/K}\otimes K)=\dim(S)$.
The intersection is zero for all choices of $u$ if and only if 
the composition $\pi\circ(\kappa_\HHH\otimes\id)$ is zero,
but this composition is just $\kappa_H$.
\end{proof}

\begin{proof}[Proof of Lemma \ref{Le1}]
Clearly $\HHH'$ is universal if and only if 
the group $\HHH$ considered above is universal 
with respect to the chosen section $\sigma:K\to S$. 
Since $\dim_K(\Lambda_H)=c=\dim(S)$,
Proposition \ref{Pr1} implies that $\HHH$ is universal if 
$\bar\sigma$ lies in a dense open subset of $\bar\Sigma$,
which is sufficient for our applications.
In order that $\HHH$ is universal for all choices of $\sigma$,
we need in addition that $\kappa_H$ vanishes.
Since $H$ is a one-dimensional formal group, 
its base change to a separable closure $K^{\sep}$
is defined over the prime field $\FF_p$
by \cite{Zink}, Satz 5.33.
Using that $\Omega_{K^{\sep}}\cong\Omega_K\otimes_KK^{\sep}$
and $\Omega_{\FF_p}=0$, the vanishing of 
$\kappa_H$ follows by its functoriality with respect 
to the base ring.
\end{proof}

An alternative proof of Lemma \ref{Le1} in the case where $a(G)=1$
is given in \cite{Tian08} using the linear structure of the
deformation space in that case.

\section{Generic completion of the universal deformation}
\label{SecGen}

In this section we prove Lemma \ref{Le3}.
As in the previous section let $G$ be a $p$-divisible 
group over a perfect field $k$ of characteristic $p$,
but now let again $R$ be the universal deformation ring 
of $G$ over $W(k)$ and $\GGG$ the universal deformation over $R$.

Let us consider the unramified discrete valuation rings
$A=\hatR_{(p)}$ and $A'=W(F')$ where $F'$ is a 
fixed algebraic closure of the residue field $F$ of $A$.
Let $\Sigma$ be the set of ring homomorphisms 
$A\to A'$ that induce the given embedding $F\to F'$ 
modulo $p$.

\begin{Prop}
\label{Pr2}
The map 
$\psi:\Sigma\to\Def_{A'/F'}(\GGG\otimes F')$
that maps a homomorphism $\sigma:A\to A'$ to the scalar 
extension of $\GGG\otimes A$ by $\sigma$ is bijective.
\end{Prop}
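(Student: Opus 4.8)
The plan is to set up a bijection by matching both sides with lifts of a fixed Hodge filtration, using the deformation machinery recalled in Section~\ref{SecDef}. First I would observe that $A$ is an unramified complete regular local ring of dimension one, hence isomorphic (non-canonically) to $W(F)[[t_1,\dots,t_{cd-1}]]$ over a chosen Cohen ring; more importantly, $A\cong W(F')\widehat\otimes_{W(F)}A$ makes $A'=W(F')$ a quotient-free situation, but the cleaner route is to filter $A'$ by the ideals $p^nA'$ and argue by successive approximation. So the first step is to reduce the statement to the infinitesimal level: an element of $\Sigma$ is the same as a compatible system of homomorphisms $A\to A'/p^nA'$ lifting $A\to F'$, and likewise a deformation over $A'=W(F')$ is a compatible system of deformations over the $A'/p^nA'$, since $\GGG\otimes A$ and any $p$-divisible group over $W(F')$ are determined by their reductions mod $p^n$ by completeness. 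Thus it suffices to show that for each $n$ the map $\Sigma_n\to\Def_{A'/p^nA'\,/\,F'}(\GGG\otimes F')$ is bijective, and then pass to the limit.

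The second step is the induction on $n$. For $n=1$ both sets are singletons (the only lift of $A\to F'$ to $A\to F'$ is itself, and the only deformation over $F'$ of $\GGG\otimes F'$ is $\GGG\otimes F'$), so the base case is trivial. For the inductive step, suppose $\sigma_n\colon A\to A'/p^nA'$ is given, lying over $\sigma_{n-1}$; I want to compare the fibre of $\psi_n$ over a deformation extending the one attached to $\sigma_{n-1}$ with the fibre of the forgetful map on the source side. Writing $I=p^{n-1}A'/p^nA'$, which is an $F'$-line with $I^2=0$ inside $A'/p^nA'$, Theorem~\ref{ThD} says $\Def$ over the square-zero extension is a torsor under $\Hom_{F'}(\Lambda_{\GGG\otimes F'},I)$, while $\Lift_{(A'/p^nA')/(A'/p^{n-1}A')}(\sigma_{n-1})$ is a torsor under $\Hom_{F'}(\Omega_A\otimes_A F',I)=\Hom_A(\Omega_A,I)$. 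By the equivariance of \eqref{EqD1} under the Kodaira–Spencer map $\kappa_{\GGG\otimes A}$, the map $\psi_n$ on these fibres is affine-linear over the map induced by $\kappa_{\GGG\otimes A}\otimes F'\colon \Lambda_{\GGG\otimes F'}\to\Omega_A\otimes_AF'$. So the whole proposition comes down to showing that this particular Kodaira–Spencer map, namely
$$
\kappa_{\GGG\otimes A}\otimes_A F'\colon\ \Lambda_{\GGG\otimes F'}\ \longrightarrow\ \Omega_A\otimes_A F',
$$
is an isomorphism of $F'$-vector spaces (and then, by torsor-compatibility, $\psi_n$ is bijective on every fibre and overall).

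The third step, which I expect to be the main obstacle, is exactly that isomorphism claim. Here I would use that $\GGG$ is the universal deformation of $G$ over $R$ with $k$ perfect, so by the discussion at the end of Section~\ref{SecDef} the Kodaira–Spencer map $\kappa_\GGG\colon\Lambda_\GGG\to\Omega_R$ is an isomorphism of $R$-modules. Localising at the prime $(p)$ and completing, I need $\Omega_R\otimes_R A\cong\Omega_A$ (the analogue of Lemma~\ref{Le4}, proved the same way via finiteness of $\Omega$ over a ring with a finite $p$-basis and the $\Fm$-adic limit argument, noting $\Omega_A$ is a finite $A$-module), together with $\Lambda_\GGG\otimes_R A\cong\Lambda_{\GGG\otimes A}$, which is immediate. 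Tensoring the isomorphism $\kappa_\GGG$ down to $A$ and then to the residue field $F$ gives $\kappa_{\GGG\otimes A}\otimes_A F\colon\Lambda_{\GGG\otimes F}\xrightarrow{\ \sim\ }\Omega_A\otimes_A F$; finally I base change along the field extension $F\to F'$, using $\Omega_{A}\otimes_A F'\cong(\Omega_A\otimes_AF)\otimes_F F'$ and $\Lambda_{\GGG\otimes F'}\cong\Lambda_{\GGG\otimes F}\otimes_F F'$, to conclude. The one point requiring care is that $\Omega_A$ here is the module of absolute (not relative) differentials and that its formation commutes with the localisation–completion and with the residue-field base change — this is why the finite-$p$-basis hypothesis, inherited from $R$ as in Lemma~\ref{Le4}, is essential, and it is the technical heart of the argument.
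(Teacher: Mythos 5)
Your overall architecture --- d\'evissage along the filtration of $A'$ by the ideals $p^nA'$, identification of each fibre of $\psi$ as a map of torsors, and equivariance of that map under the Kodaira--Spencer homomorphism via \eqref{EqD1} --- is exactly the paper's argument. But the step you yourself single out as the technical heart, the proof that the relevant Kodaira--Spencer map is an isomorphism, is resolved the wrong way. You invoke ``the discussion at the end of Section~\ref{SecDef}'' to assert that $\kappa_\GGG\colon\Lambda_\GGG\to\Omega_R$ is an isomorphism and then propose to descend this through an isomorphism $\Omega_R\otimes_RA\cong\Omega_A$. That discussion, however, concerns the equal-characteristic situation where $R$ is a $k$-algebra (it rests on $\Omega_R\cong\hatOmega_{R/k}$ for $k$ perfect of characteristic $p$). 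In Section~\ref{SecGen} the ring $R$ is the deformation ring over $W(k)$; its module of absolute differentials is not a finite $R$-module (already $\Omega_{\Quot(R)/\QQ}$ has uncountable dimension, being governed by the transcendence degree over $\QQ$), so $\kappa_\GGG$ cannot be surjective onto $\Omega_R$, and the finite-$p$-basis/limit argument of Lemma~\ref{Le4} has no analogue for the characteristic-zero rings $R_{(p)}$ and $A$: $\Omega_A$ is likewise not a finite $A$-module, and the ``analogue of Lemma~\ref{Le4} proved the same way'' does not exist. The correct route is to reduce modulo $p$ \emph{first}: the only object you ever need is $\Omega_A\otimes_AF=\Omega_A/p\Omega_A$, and the conormal sequence for $pA\subset A$ identifies this with $\Omega_F$ since $dp=0$. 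Now $F$ is the fraction field of $\bar R=R/pR$, and $\GGG\otimes\bar R$ \emph{is} an equal-characteristic universal deformation, so the end of Section~\ref{SecDef} legitimately gives that $\kappa_{\GGG\otimes\bar R}\colon\Lambda_{\GGG\otimes\bar R}\to\Omega_{\bar R}$ is an isomorphism; localising to $F$ and extending scalars to $F'$ yields the isomorphism $\Lambda_{\GGG\otimes F'}\to\Omega_F\otimes_FF'$ that makes your torsor map bijective on each fibre.

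Two smaller points. First, before ``torsor-compatibility'' gives bijectivity on a fibre you must know the fibre on the source side, $\Lift_{A'_{n+1}/A'_n}(\sigma)$, is non-empty; this is the formal smoothness of $A$ over $\ZZ_p$ for the $p$-adic topology, for which the paper cites \cite{Berthelot-Messing}, Proposition 1.2.6. Second, your parenthetical description of $A$ as $W(F)[[t_1,\dots,t_{cd-1}]]$ is wrong on two counts: $A=\hatR_{(p)}$ is a one-dimensional complete discrete valuation ring with uniformiser $p$, hence a Cohen ring of its residue field $F$ and not a power series ring; and $F$ is imperfect, so the Cohen ring is not $W(F)$. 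Since you immediately abandon that route in favour of the $p$-adic filtration, this does no damage, but it signals the same confusion between the equal- and mixed-characteristic settings that undermines your third step.
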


\begin{proof}
Let $\bar R=R/pR$.
Since $\GGG\otimes\bar R$ is the universal deformation of $G$ 
in characteristic $p$, the homomorphism $\kappa_{\GGG\otimes\bar R}$
is an isomorphism, whence an isomorphism
$$
\kappa_{\GGG\otimes F}:\Lambda_{\GGG\otimes F}\cong\Omega_F
$$
as $F$ is the quotient field of $\bar R$. 
The proposition is a formal consequence:
Let $A'_n=A'/p^nA'$. It suffices that for every $n\geq 1$ and 
every homomorphism $\sigma:A\to A'_n$ lifting $F\to F'$,
using the notation of \eqref{EqD1} the obvious map
$$
\Lift_{A'_{n+1}/A'_n}(\sigma)\to
\Def_{A'_{n+1}/A'_n}(\sigma_*(\GGG\otimes A))
$$
is bijective. Since its source is non-empty by
\cite{Berthelot-Messing}, Proposition 1.2.6 and 
since $\Omega_A/p\Omega_A=\Omega_F$, this is an 
equivariant map of torsors with respect to the 
homomorphism of $F'$-vector spaces
$$
\Hom_F(\Omega_F,p^nA'/p^{n+1}A')\to\Hom_F(\Lambda_{\GGG_F},p^nA'/p^{n+1}A')
$$
induced by $\kappa_{\GGG\otimes F}$, which is bijective.
\end{proof}

In order to deduce Lemma \ref{Le3} we have to describe 
the Galois representation on the Tate module of an 
arbitrary deformation over $A'$ of $H=\GGG\otimes F'$.
Let $K$ be the quotient field of $A'$, choose an
algebraically closed field $\Omega$ containing $K$,
and write $\Gal_{K}=\pi_1(K,\Omega)$. Also fix
an isomorphism of $p$-divisible groups over $F'$ 
$$
H\cong(\QQ_p/\ZZ_p)^c\oplus\mu_{p^\infty}^d.
$$

\begin{Lemma}
\label{Le5}
The map of sets $e:\Def_{A'/F'}(H)\to H^1(\Gal_{K},\ZZ_p(1))^{cd}$
that maps a deformation $\HHH$ to the isomorphism class 
of the associated extension of\/ $\Gal_K$-modules
$$
0\longrightarrow\ZZ_p(1)^d\longrightarrow T_p\HHH(\Omega)
\longrightarrow\ZZ_p^c\longrightarrow 0
$$
can be written as a composition
$$
\Def_{A'/F'}(H)
\xrightarrow{\alpha}
\hatGm(A')^{cd}
\xrightarrow{i}
(K^*)^{cd}
\xrightarrow{\delta}
H^1(\Gal_{K},\ZZ_p(1))^{cd}
$$
where $\alpha$ is bijective, $i$ is the natural inclusion,
and $\delta$ is the Kummer homomorphism.
\end{Lemma}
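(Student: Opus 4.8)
The plan is to realise $\Def_{A'/F'}(H)$, $\hatGm(A')^{cd}$ and the group of extension classes in $H^1(\Gal_{K},\ZZ_p(1))^{cd}$ through the same concrete data and to trace the three maps. First I would describe deformations of $H$ as extensions. Since $A'=W(F')$ is a complete strictly henselian local ring and $H$ is ordinary (whence the fixed isomorphism $H\cong(\QQ_p/\ZZ_p)^c\oplus\mu_{p^\infty}^d$), the connected--\'etale filtration of any deformation $\HHH$ over $A'$ is an exact sequence $0\to\HHH^\circ\to\HHH\to\HHH^{\mathrm{et}}\to0$ in which $\HHH^\circ$ is of multiplicative type and $\HHH^{\mathrm{et}}$ is \'etale, hence constant; by Grothendieck's rigidity of multiplicative-type groups and the corresponding rigidity of \'etale groups, together with the fixed isomorphism on special fibres, $\HHH^\circ\cong\mu_{p^\infty}^d$ and $\HHH^{\mathrm{et}}\cong(\QQ_p/\ZZ_p)^c$ canonically. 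Using $\Hom_{A'}(\QQ_p/\ZZ_p,\mu_{p^\infty})=\Hom_{A'}(\mu_{p^\infty},\QQ_p/\ZZ_p)=0$ this identifies $\Def_{A'/F'}(H)$ with $\Ext^1_{A'}((\QQ_p/\ZZ_p)^c,\mu_{p^\infty}^d)=\Ext^1_{A'}(\QQ_p/\ZZ_p,\mu_{p^\infty})^{cd}$, the $cd$ coordinates being those of the fixed splitting. All of $e$, $i$ and $\delta$ respect these coordinates, so I would immediately reduce to $c=d=1$.

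Next I would construct $\alpha$ from the standard computation of $\Ext^1_{A'}(\QQ_p/\ZZ_p,\mu_{p^\infty})$. Pushing out along $\mu_{p^\infty}\hookrightarrow\GG_m$ and using that $\GG_m/\mu_{p^\infty}$ is uniquely $p$-divisible identifies this group with $\Ext^1_{A'}(\QQ_p/\ZZ_p,\GG_m)$; writing $\QQ_p/\ZZ_p=\varinjlim_n\ZZ/p^n$ and invoking $H^1_{\mathrm{fppf}}(A',\GG_m)=0$ yields $\Ext^1_{A'}(\QQ_p/\ZZ_p,\GG_m)=\varprojlim_n(A')^\times/((A')^\times)^{p^n}$, the $p$-adic completion of $(A')^\times$. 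As $F'$ is algebraically closed, $F'^\times$ is $p$-divisible and torsion-free at $p$, so this completion is $1+pA'=\hatGm(A')$; this is the bijection $\alpha$. For the comparison below it is convenient to record the same computation at finite level: each $\HHH[p^n]$ is an extension of $\ZZ/p^n$ by $\mu_{p^n}$ over $A'$ classified by a class $a_n\in(A')^\times/((A')^\times)^{p^n}$, the maps $\HHH[p^{n+1}]\xrightarrow{p}\HHH[p^n]$ make $(a_n)_n$ a coherent system, and $\alpha(\HHH)\in\hatGm(A')$ is the resulting element.

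It remains to verify $e=\delta\circ i\circ\alpha$, and this is the main obstacle. Over $K$, which has characteristic $0$, the group scheme $\HHH[p^n]_K$ is finite \'etale, and the level-$n$ description above says precisely that the $\Gal_K$-module $\HHH[p^n](\Omega)$ is the extension of the trivial module $\ZZ/p^n$ by $\mu_{p^n}$ whose class in $H^1(\Gal_K,\mu_{p^n})$ is the Kummer class of $a_n$; concretely a lift $P\in\HHH[p^n](\Omega)$ of $1$ is a $p^n$-th root of $a_n$ and $\gamma\mapsto\gamma(P)-P$ is the Kummer cocycle. Passing to the inverse limit over $n$ then identifies the class of $0\to\ZZ_p(1)\to T_p\HHH(\Omega)\to\ZZ_p\to0$ in $H^1(\Gal_K,\ZZ_p(1))=\varprojlim_nH^1(\Gal_K,\mu_{p^n})$ with $\delta$ applied to the image of $\alpha(\HHH)$ in $K^\times$, as desired. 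The delicate point is the bookkeeping of normalisations: one must check that the finite-level classification of extensions over $A'$, the transition maps $\HHH[p^{n+1}]\xrightarrow{p}\HHH[p^n]$ defining the Tate module, and the Kummer homomorphism $\delta$ are compatible, so that the limit genuinely produces $\delta\circ i\circ\alpha$ and not some twist of it; I would check this by a diagram chase at finite level (for $p=2$ one also has $\mu_{2^n}(A')\ne 0$, contributing an extra $\mu_2$ at each level which however dies in the inverse limit).
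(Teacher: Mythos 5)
Your argument is correct and follows essentially the same route as the paper: identify $\Def_{A'/F'}(H)$ with $\Ext^1_{A'}(\QQ_p/\ZZ_p,\mu_{p^\infty})^{cd}$ via the connected--\'etale sequence, compute that Ext group as $\hatGm(A')$ (the paper uses the resolution $0\to\ZZ\to\ZZ[1/p]\to\QQ_p/\ZZ_p\to0$ where you push out to $\GG_m$ and use $\ZZ/p^n\ZZ$ --- an equivalent computation), and then check compatibility of the connecting maps with the Kummer homomorphism at finite level. The normalisation issue you flag at the end is real but harmless: the paper absorbs it by building a sign into $\alpha$ (it sets $\alpha=(-\beta^{-1})^{cd}\circ\gamma$), since the relevant finite-level square only anti-commutes.
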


\begin{proof}
We have a bijection
$\gamma:\Def_{A'/F'}(H)\cong\Ext^1_{A'}(\QQ_p/\ZZ_p,\mu_{p^\infty})^{cd}$
and an isomorphism 
$\beta:\hatGm(A')\cong\Ext^1_{A'}(\QQ_p/\ZZ_p,\mu_{p^\infty})$
defined as the projective limit of
$$
\beta_n:\mu_{p^\infty}(A'_n)\cong\Ext^1_{A'_n}(\QQ_p/\ZZ_p,\mu_{p^\infty})
$$
where $\beta_n$ is the connecting homomorphism of
$0\to\ZZ\to\ZZ[1/p]\to\QQ_p/\ZZ_p\to 0$. 
These are isomorphisms because 
$\Ext^i_{A'_n}(\ZZ[1/p],\mu_{p^\infty})$ vanishes for $i\leq 1$.
Put $\alpha=(-\beta^{-1})^{cd}\circ\gamma$.
Let $\bar K$ be the algebraic closure of $K$ in $\Omega$.
The required relation $e=\delta\circ i\circ\alpha$ translates 
into anti-commutativity of the following diagram, 
where $\delta_1$ is induced by the exact sequence
$0\to\mu_{p^n}\to\bar K^*\to\bar K^*\to 0$ and $\delta_2$ 
is induced by $0\to\ZZ\to\ZZ\to\ZZ/p^n\ZZ\to 0$, while
$\varepsilon_1$ maps an extension $E$ to $E/p^nE$
and $\varepsilon_2$ maps $E$ to $E[p^n]$.
$$
\xymatrix@M+0.2em{
K^*=\Hom_{\Gal_{K}}(\ZZ,\bar K^*) \ar[r]^-{\delta_1} \ar[d]_{\delta_2} &
\Ext^1_{\Gal_{K}}(\ZZ,\mu_{p^n}) \ar[d]^{\varepsilon_1} \\
\Ext^1_{\Gal_{K}}(\ZZ/p^n\ZZ,\bar K^*) \ar[r]^{\varepsilon_2} &
\Ext^1_{\Gal_{K}}(\ZZ/p^n\ZZ,\mu_{p^n})
}
$$
This is easily checked.
\end{proof}

\begin{proof}[Proof of Lemma \ref{Le3}]
Let $K_{\infty}=K[\mu_{p^\infty}]$. 
The homomorphism $\rho^\circ_{A'}:\Gal^\circ_{A'}\to\ZZ_p^{cd}$
is surjective if and only if its reduction
$\bar\rho^\circ_{A'}:\Gal^\circ_{A'}\to\FF_p^{cd}$ is surjective.
By Proposition \ref{Pr2} and Lemma \ref{Le5}
we have a bijection $a:\Sigma\cong\hatGm(A')^{cd}$ so that 
$\bar\rho^\circ_{A'}$ is the image of $a(\sigma)$ under 
$$
\hatGm(A')\xrightarrow{i}K_\infty^*
\xrightarrow{pr}K_\infty^*/(K_\infty^*)^p
\overset\delta\cong\Hom(\Gal^\circ_{A'},\FF_p)
$$
(componentwise)
where $\delta$ is induced by the Kummer sequence.
Thus $\rho^\circ_{A'}$ is surjective if and only if the
components of $a(\sigma)$ map to linearly independent
elements in the $\FF_p$-vector space $K_\infty^*/(K_\infty^*)^p$.
Since $a(\sigma)$ is arbitrary it suffices
to show that the image of $\hatGm(A')$ in 
$K_\infty^*/(K_\infty^*)^p$ is infinite.

Let $K_n=K[\mu_{p^n}]$ and $V_n=K_n^*/(K_n^*)^p$, 
which for $n\geq 1$ is identified with the group 
$\Hom(\Gal_{K_n},\FF_p)$.
Using that $K_{\infty}$ over $K_2$ is a $\ZZ_p$-extension
it is easy to see that the kernel of $V_2\to V_n$
is independent of $n$ for $n\geq 3$.
Since $V_1\to V_3$ has finite kernel,
it suffices that the image of $\hatGm(A')$ 
in $V_1$ is infinite. 
A consideration of valuations shows that
for every $x\in A'$ of valuation $1$, 
the element $1+x$ does not lie in $(K_1^*)^p$.
In other words, the kernel of $\hatGm(A')\to V_1$ 
is contained in the kernel of the surjection 
$\hatGm(A')\to F'$ given by $1+px\mapsto \bar x$. 
Since $F'$ is infinite the assertion follows.
\end{proof}

\end{document}